\title{The graph theoretic moment problem\footnote{AMS
Subject Classification: Primary 05C99, Secondary 82B99}}
\author{{\sc L\'aszl\'o Lov\'asz}\footnote{Research supported by
OTKA grant No. 77780 and ERC Advanced research grant No. 227701},
E\"otv\"os Lor\'and University, Budapest\\and\\
{\sc Bal\'azs Szegedy}, University of Toronto, Toronto}
\date{Oct 2010}
\def\FF{{\cal F}}
\def\GG{{\cal G}}\def\II{{\cal I}}
\def\MM{{\cal M}}\def\PP{{\cal P}}\def\QQ{{\cal Q}}
\def\TT{{\cal T}}\def\WW{{\cal W}}
\def\E{{\sf E}}
\def\Var{{\sf Var}}
\def\Pr{{\sf P}}
\def\R{{\mathbb R}}
\def\Z{{\mathbb Z}}
\def\N{{\mathbb N}}
\def\eps{\varepsilon}
\def\hom{{\rm hom}}
\def\inj{{\rm inj}}
\def\rk{{\rm rk}}
\def\ize{{Z^{(2)}}}
\newtheorem{theorem}{Theorem}[section]
\newtheorem{prop}[theorem]{Proposition}
\newtheorem{lemma}[theorem]{Lemma}
\newtheorem{corollary}[theorem]{Corollary}
\newtheorem{remark}[theorem]{Remark}
\newtheorem{property}{Property}
\long\def\killtext#1{}
\newenvironment{proof}{\medskip\noindent{\bf Proof.}}{\hfill$\square$\medskip}
\newenvironment{proof*}[1]{\medskip\noindent{\bf Proof of #1.}}{\hfill$\square$\medskip}
\begin{document}

\maketitle

\tableofcontents

\begin{abstract}
We study an analogue of the classical moment problem in the framework
where moments are indexed by graphs instead of natural numbers. We
study limit objects of graph sequences where edges are labeled by
elements of a topological space. Among other things we obtain
strengthening and generalizations of the main results of previous
papers characterizing reflection positive graph parameters, graph
homomorphism numbers, and limits of simple graph sequences. We study
a new class of reflection positive partition functions which
generalize the node-coloring models (homomorphisms into weighted
graphs).
\end{abstract}

\section{Introduction}

To study very large graphs, a natural way to obtain information about
them is sampling. In the case of dense simple graphs, a natural way
to sample is to pick $k$ random nodes and look at the subgraph
induced by them. A sequence $G_1,G_2,\dots$ of simple graphs with
$|V(G_n)|\to\infty$ is called {\it convergent} if the distribution of
this random induced subgraph is convergent for every $k$. To every
convergent sequence of simple graphs one can assign a limit object in
the form of a 2-variable real function \cite{LSz1}.

Instead of the induced subgraph samples, one can consider
homomorphism densities of various ``small'' graphs. While for simple
graphs they trivially carry the same information as the samples
described above (connected by a simple inclusion-exclusion), their
algebraic properties are quite different and often more useful. These
densities are very good 2-variable analogues of moments of 1-variable
functions (see Section \ref{MOMSIMPLE}).

It turns out that in a more general setting, moment sequences can be
indexed by multigraphs rather than simple graphs. Let $X$ be a random
variable. A moment of $X$ (in a slightly generalized sense) is the
expected value of $p(X)$ where $p$ is a polynomial in $\R[x]$. The
classical moment problem can be phrased as follows: which functions
$\alpha:~\R[x]\to\R$ can be represented by a real valued random
variable $X$ so that $\alpha(p)=E(p(X))$ for all $p\in\R[x]$. The
necessary and sufficient condition is that $\alpha$ is linear,
normalized ($\alpha(1)=1$) and positive definite ($\alpha(p^2)\geq 0$
for very polynomial $p$).

Consider a symmetric measurable 2-variable function $W:~[0,1]^2\to
[0,1]$. Let $X_1,X_2,X_3,...$ be random independent elements from
$[0,1]$. The random variables $Z_{i,j}=W(X_i,X_j)~~(i\neq j)$ have
all the same distribution but they are not all independent (for
example, $Z_{1,2}$ and $Z_{2,3}$ are correlated in general). Note
that by the symmetry of $W$, we have $Z_{i,j}=Z_{j,i}$ for every $i$
and $j$.

It is natural to define the moments of $W$ as expected values of
multivariate polynomials in the variables $Z_{i,j}$. As in the
one-variable case, $W$ induces a linear map from the polynomial ring
$\R[\{z_{i,j}|1\leq i<j\}]$ to the real numbers by
\begin{equation}\label{TDEF-P}
t(p,W)=E(p(\{Z_{i,j}|1\leq i<j\})),
\end{equation}
and this moment function is determined by its values on monomials.
Every monomial in this ring corresponds to a multigraph, and if two
such monomials correspond to isomorphic graphs, then the moment
function has the same value on them.

So, just like in the one-variable case, $W$ has a countable number of
``moments'', but instead of forming a single sequence, they are
indexed by (finite) multigraphs.

\subsection{Moments indexed by simple graphs}\label{MOMSIMPLE}

Somewhat surprisingly, if we want to define moments of a $2$-variable
function $W$, it is often enough to restrict ourselves to simple
graphs (in other words, to multilinear polynomials $p\in \ize$). In
this section we recall various results that can be viewed as
supporting this claim. (We'll return to why moments indexed by
multigraphs are needed, and how to treat them.)

Recall that a {\it graph parameter} is a map from the set of finite
graphs to the real numbers, invariant under isomorphism. A {\it
simple graph parameter} is only defined on simple graphs.

Let $\WW$ be the space of bounded symmetric measurable functions
$W:~[0,1]^2\to\R$, and let $F$ be a simple graph with $k$ nodes. We
define
\begin{equation}\label{EQ:T-DEF}
t(F,W)=\int_{[0,1]^{V(F)}} \prod_{ij\in E(F)} W(x_i,x_j)\,dx.
\end{equation}
We call $t(F,W)$ as the {\it $F$-moment} of the function $W$. While
this definition is meaningful for every (multi)-graph $F$, we'll
restrict our attention for the time being to simple graphs.

There is an obvious relation between these moments: if $F_1$ and
$F_2$ are two graphs and $F_1F_2$ denotes their disjoint union, then
\[
t(F_1F_2,W)=t(F_1,W)t(F_2,W).
\]
We call this relation the {\it multiplicativity} of the moments.
Using this relation, we can restrict our attention to moments defined
by connected graphs.

Let us compare some basic properties of these moments with the
analogous properties of moments of one-variable functions.

\begin{property}\label{P1}
{\it Moment sequences are interesting.} For example, the Fibonacci
sequence is a moment sequence. Moment parameters are also
interesting. The number of $q$-colorings of a graph $F$, divided by
$q^{|V(F)|}$, is a moment parameter; more generally, the number of
homomorphisms $\hom(F,G)$ of a graph $F$ into a fixed (for
simplicity, simple) graph $G$ (appropriately normalized) is a moment
parameter. To be precise, if
\[
t(F,G)=\frac{\hom(F,G)}{|V(G)|^{|V(F)|}},
\]
then $t(F,G)=t(F,W_G)$ for an appropriate function $W_G$. The number
of nowhere-zero $k$-flows is an important graph parameter
representable this way.

To show a moment sequence of a non-step-function with combinatorial
significance, let us quote the following example from \cite{LSz1}:
the number
\[
2^{|E(F)|} t(F, \cos(2\pi(x-y)))
\]
is the number of eulerian orientations of the graph $F$.
\end{property}

\begin{property}\label{P2}
{\it Any finite number of moments are independent: no finite
number of moments determine any other.} This is also true in the
2-variable case: {\it For any finite set $F_1,\dots,F_k$ of connected
graphs, the set of vectors $(t(F_1,W),\dots,t(F_k,W))$ has a nonempty
interior in $\R^k$} (Erd\H{o}s, Lov\'asz and Spencer \cite{ELS}).
This shows that each of this countable, but ``large'' set of moments
carries information that is not implied by a finite number of others.
So in a sense this large set of moments is indeed needed (instead of,
say, a two-parameter family).
\end{property}

\begin{property}\label{P3}
{\it The moments determine the function up to a measure preserving
transformation of the variable.} (For one-variable functions, this is
equivalent to saying that they determine the distribution of the
function values, but this would be too weak for two-variable
functions.)  To be more precise, it is well known that if
$f,g:~[0,1]\to\R$ are two (for simplicity, bounded) measurable
functions such that $\int_0^1 f^k=\int_0^1 g^k$ for all $k$, then
there is a third bounded measurable function $h:~[0,1]\to\R$ and
measure-preserving maps $\varphi,\psi:~[0,1]\to[0,1]$ such that
$f(x)=h(\varphi(x))$ and $g(x)=h(\psi(x))$ for almost all $x$.

This fact generalizes to two-variable functions (Borgs, Chayes and
Lov\'asz \cite{BCL}): {\it If $U,W\in \WW$ such that for every simple
graph $F$, $t(F,U)=t(F,W)$, then there exists a function $V\in\WW$
and two measure preserving maps $\varphi,\psi:~[0,1]\to[0,1]$ such
that $U(x,y)=V(\varphi(x),\varphi(y))$ and
$W(x,y)=V(\psi(x),\psi(y))$ almost everywhere.}
\end{property}

\begin{property}\label{P4}
{\it Moment sequences can be characterized by inclusion-exclusion.}
Hausdorff \cite{Haus} proved that a sequence $(a_0,a_1,\dots)$ is the
moment sequence of a function $f$ with $0\le f\le 1$ if and only if
$a_0=1$, and the following inequality holds for all $0\le k\le n$:
\[
\sum_{j=0}^k (-1)^{k-j} {k\choose j} a_{n+j}\ge 0.
\]
(cf. Diaconis and Freedman \cite{DF}).

The following analogue of this for graph parameters was proved by the
authors in \cite{LSz1}: {\it A simple graph parameter $f$ can be
represented as $f=t(.,W)$ with some $W\in\WW_0$ if and only if
$f(K_1)=1$, $f$ is multiplicative, and the following inequality holds
for all simple graphs $F$:}
\[
\sum_{F'\supseteq F\atop V(F')=V(F)} (-1)^{|E(F')\setminus E(F)|}
f(F) \ge 0.
\]
\end{property}

\begin{property}\label{P5}
{\it Moment sequences can be characterized by a semidefiniteness
condition.} Hausdorff gave another characterization as well: a
sequence $(a_0,a_1,\dots)$ is the moment sequence of a function $f$
with $0\le f\le 1$ if and only if $a_0=1$, and the (infinite) matrix
$A$ defined by $A_{ij}=a_{i+j-2}$ $(i,j=1\dots \infty)$ is positive
semidefinite.

An analogue for graph parameters was proved by the authors in
\cite{LSz1}. We need to define what replaces adding up indices $i$
and $j$. To this end, we define {\it $k$-labeled simple graph} ($k\ge
0$) is a finite graph in which $k$ nodes are labeled by $1,2,\dots k$
(it can have any number of unlabeled nodes). The {\it simple product}
$F_1F_2$ of two $k$-labeled graphs $F_1$ and $F_2$ is defined by
taking their disjoint union, and then identifying nodes with the same
label; if we get parallel edges, then their multiplicity is
suppressed. (For 0-labeled graphs product means disjoint union.)

Let $f$ be any simple graph parameter and $k\ge 0$. We define the
following (infinite) matrix $M(f,k)$. The rows and columns are
indexed by isomorphism types of $k$-labeled simple graphs. The entry
in the intersection of the row corresponding to $F_1$ and the column
corresponding to $F_2$ is $f(F_1F_2)$.

With this notation, we can state the following characterization of
moment parameters \cite{LSz1}: {\it A simple graph parameter $f$ can
be represented as $f=t(.,W)$ with some $W\in\WW_0$ if and only if
$f(K_1)=1$, $f$ is multiplicative, and the (infinite) matrix $M(f,k)$
is positive semidefinite for each $k$.}
\end{property}

\begin{property}\label{P6}
{\it A sequence is the moment sequence of a stepfunction if and only
if the matrix $A$ defined above is semidefinite and has finite rank.}
To state an analogous assertion for two-variable functions, we call a
symmetric measurable function $W:~[0,1]^2\to[0,1]$ is a {\it
stepfunction} if there is a finite partition $[0,1]=\cup_{i=1}^r S_i$
into measurable sets such that $W$ is constant on every $S_i\times
S_j$. The following was proved for simple graph parameters by
Lov\'asz and Schrijver \cite{LSch} (paralleling an earlier result by
Freedman, Lov\'asz and Schrijver \cite{FLS} for multigraph
parameters, see Theorem \ref{THM:FLS} below): {\it A simple graph
parameter is the moment parameter of a stepfunction with $q$ steps if
and only if the matrix $M(f,k)$ is semidefinite and has rank at most
$q^k$ for every $k\ge 0$.}

Considering stepfunctions points at other interesting analogies with
the one-variable case. It is not hard to see that {\it a one-variable
function is a stepfunction if and only if it is determined by a
finite set of its moments.} The ``only if'' part of the analogous
statement for 2-variable functions was proved (in graph-theoretic
terms) for two-variable functions by Lov\'asz and S\'os \cite{LS}:
{\it For every stepfunction $U\in\WW$ there is a finite set
$F_1,\dots,F_m$ of simple graphs such that if $t(F_j,U)=t(F_j,W)$ for
some $W\in\WW$ for $j=1,\dots,m$, then $t(F,U)=t(F,W)$ for every
simple graph $F$.} However, the converse fails to hold \cite{LSz5}.
\end{property}

\begin{property}\label{P7}
{\it Convergence in moments implies convergence.} More exactly, if
$X_1,X_2,\dots$ are uniformly bounded random variables such that
$\E(X_n^k)$ is convergent for every $k$, then $X_n$ tends to a limit
in distribution. Analogously, if $(W_n)$ is a uniformly bounded
sequence of functions in $\WW$, then $t(F,W_n)$ is convergent for
every simple graph $F$ if and only if there are measure preserving
maps $\varphi_n:~[0,1]\to[0,1]$ such that the functions
$W_n'(x,y)=W_n(\varphi_n(x),\varphi_n(y))$ are convergent in an
appropriate norm (the $\|.\|_\square$.

This fact is closely related to limits of graph sequences. In fact,
if $(G_n)$ is a sequence of simple graphs for which $t(F,G_n)$ is
convergent for every simple graph $F$, then there is a function
$W\in\WW$ such that $t(F,G_n)\to t(F,W)$ for every $F$ \cite{LSz1}.
This result can be extended to the case when $(G_n)$ is a sequence of
weighted graphs with uniformly bounded edgeweights \cite{BCLSV}.
\end{property}

\subsection{Moments indexed by multigraphs}\label{MOMMULTI}

We have seen that the densities of simple graphs in symmetric
measurable functions $W:~[0,1]^2\to[0,1]$ can be considered as an
analogue of moments. The formula \ref{EQ:T-DEF} defining $F$-moments
makes sense for all (multi)graphs $F$, and there are many reasons why
we don't want to restrict ourselves to just simple graph moments. For
example, we may be interested in the ``ordinary'' moments of a
function $W\in\WW$ (considered as a function in a single variable
defined on the probability space $[0,1]^2$, rather than a 2-variable
function). These moments can be expressed as
\[
\int_{[0,1]^2} W(x,y)^n\,dx\,dy = t(K_2^n,W),
\]
where $K_2^n$ consists of two nodes connected by $n$ parallel edges.
By Property \ref{P3}, this is determined by the simple graph moments,
but it can be seen (using a slight extension of the results mentioned
in Property \ref{P2}) that no finite number of them determines
$t(K_2^n,W)$.

Another reason for considering multigraphs is that we want to think
of a polynomial $p$ in variables $z_{i,j}$ ($1\le i<j\le 1$) as a
formal linear combination of multigraphs. Then every multigraph
parameter $f$ can be extended linearly to these polynomials.

\subsubsection{Limits of weighted graphs}

Suppose that the sequence $t(F,G_n)$ is convergent for every
multigraph $F$ (rather than for every simple graph $F$). Does this
imply that there exists a limit function $W$ that encodes the
limiting values?

To illustrate the difficulty, let $G_n$ be a random graph on $n$
nodes, with edge probability $1/2$. It is easy to see that with
probability $1$,
\[
t(F,G_n)\to 2^{-|E(F)|}=t(F,1/2) \qquad (n\to\infty)
\]
for every simple graph $F$ (here $1/2$ denotes the identically $1/2$
function). It can be shown that this is the only limit function (e.g.
by Property 3 above).

Suppose that $F$ has multiple edges, and let $F'$ denote the simple
graph obtained from $F$ by suppressing the edge multiplicities. Then
$t(F,G_n)=t(F',G_n)$, so $t(F,G_n)\to2^{-|E(F')|}$; but
$t(F,1/2)=2^{-|E(F)|}$, so while the sequence $(t(F,G_n))$ is
convergent for every multigraph $F$, its limit is not $t(F,1/2)$ if
multiple edges are present. By the uniqueness of the limit function,
this means that the limit cannot be described by a single function in
$\WW$.

In \cite{LSz6}, limit objects for moments indexed by multigraphs with
bounded edge multiplicities are described. Let $\WW(d)$ denote the
set of symmetric measurable functions $W:~[0,1]\times [0,1]\to
[-d,d]$. A {\it moment function sequence} is a sequence
$(W_0,W_1,\dots)$ of functions such that $W_i\in\WW(d^i)$ and
$(W_0(x,y),W_1(x,y),\dots)$ is a moment sequence for almost all pairs
$(x,y)\in[0,1]^2$. Then the limit object of a graph sequence with
edge weights uniformly bounded by $d$ can be described by a moment
function sequence. (As in the one-variable and also in the
simple-graph case, these objects are not uniquely determined by their
moments since any measure preserving transformation of $[0,1]$ yields
another object which has the same moments.)

It is also shown in \cite{LSz6} that moment function sequences can be
represented essentially uniquely by functions $W:~[0,1]^2\to\PP(d)$,
where $\PP[-d,d]$ is the set of probability distributions on the
Borel sets of $[-d,d]$ (we endow $\PP[-d,d]$ with the week topology,
and require $W$ to be measurable as a map into the Borel sets of
$\PP[-d,d]$). Such a function is called a {\it $[-d,d]$-graphon}.

There is a third representation which is unique and is analogous to
the distribution of a random variable: This is a probability
distribution on infinite edge-weighted graphs on the node set $\N$
that is symmetric under the permutations of the node set and has the
property that disjoint subsets of $\N$ span independent (in the
probability sense) labeled weighted graphs. (Again, the edge weights
are between $d$ and $-d$.) The above can be viewed as a natural
characterization of these homogeneous infinite random graph models.

\subsubsection{Characterizing moment parameters}

One of the goals of this paper is to characterize moment parameters
indexed by multigraphs. Here are some basic properties of graph
parameters of the form $t(.,W)$, where $W\in\WW(d)$ (see Proposition
\ref{PROP:NEC}):

\begin{itemize}
\item[(1)] $t(K_1,W)=1$ where $K_1$ is the one-node graph ($t(.,W)$
is normalized).

\item[(2)] $t(F_1\cup F_2,W)=t(F_1,W)t(F_2,W))$ for all $F_1$ and
$F_2$, where $F_1\cup F_2$ is the disjoint union of $F_1$ and $F_2$
(multiplicativity).

\item[(3)] $t(p^2,W)\ge 0$ for all polynomials $p\in \ize$ (weak
reflection positivity). Note that this makes sense since, as remarked
above, every multigraph parameter extends to polynomials in $\ize$.

\item[(4)] $|t(K_2^n,W)|\le d^n$ (exponentially bounded growth on
the $n$-fold edge).
\end{itemize}

Let $\TT_3(d)$ denote the of multigraph parameters with these four
properties, and let $f\in\TT_3(d)$. Can $f$ be represented as
$t(.,W)$ with some function $W:\Omega\times\Omega\to [-d,d]$? The
parameter $2^{-|E(F')|}$ discussed above shows that these conditions
are not sufficient; however they are not very far from being
sufficient. We will show (Theorem \ref{closure}) that the set of
graph parameters of the form $t(.,W)$ $(W\in\WW(d))$ is dense with
respect to the pointwise convergence in $\TT_3(d)$. We will also show
that graph parameters in $\TT_3(d)$ can be represented by
$[-d,d]$-graphons.

\begin{remark}\label{PDSEMI}
There is another generalization of the classical moment problem, the
theory of positive definite functions on semigroups \cite{BCR,LM}.
Although our context does not entirely fit into the framework of that
theory, we will make use of a theorem about exponentially bounded
positive definite functions \cite{BM} (see \cite{LSch} for results on
semigroups that are related to both that theory and our framework).
\end{remark}

\subsubsection{Homomorphisms and stepfunctions}

One can define the homomorphism number $\hom(F,H)$ from a multigraph
into a weighted graph, as well as connection matrices $M(f,k)$ for
multigraph parameters $f$, analogously to the simple case. The
analogue of Property 5 above holds (Freedman, Lov\'asz and Schrijver
\cite{FLS}):

\begin{theorem}\label{THM:FLS}
A multigraph parameter is of the form $\hom(.,H)$ for some weighted
graph with $q$ nodes if and only if the multigraph connection matrix
$M(f,k)$ is semidefinite and has rank at most $q^k$ for every $k\ge
0$.
\end{theorem}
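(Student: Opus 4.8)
The plan is to prove both directions of the characterization, with the necessity being routine and the sufficiency requiring the real work.

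For the \emph{necessity} direction, suppose $f = \hom(\,\cdot\,, H)$ for a weighted graph $H$ on $q$ nodes (with node weights $\alpha_i$ and edge weights $\beta_{ij}$). First I would set up the standard factorization of the connection matrix. Fix $k \ge 0$ and a $k$-labeled multigraph $F_1 F_2$: gluing $F_1$ and $F_2$ along their labeled nodes means that in the homomorphism sum, the images of the $k$ labeled nodes are shared. Writing $\phi: \{1,\dots,k\} \to V(H)$ for the assignment of the labeled nodes, one gets
\[
f(F_1 F_2) = \sum_{\phi} \hom_\phi(F_1, H)\,\hom_\phi(F_2, H),
\]
where $\hom_\phi(F_i, H)$ counts homomorphisms extending $\phi$, weighted appropriately. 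This exhibits $M(f,k)$ as $\sum_\phi v_\phi v_\phi\T$ (up to absorbing node-weight factors), where $v_\phi$ is the column vector indexed by $k$-labeled graphs $F$ with entry $\hom_\phi(F,H)$. Hence $M(f,k)$ is positive semidefinite, and its rank is at most the number of maps $\phi$, namely $q^k$.

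The \emph{sufficiency} direction is the substantive part, and is where I expect the main obstacle. Assume $M(f,k)$ is positive semidefinite of rank at most $q^k$ for every $k$. The strategy is to reconstruct the target weighted graph $H$ from the algebra of labeled graphs. First I would form the quotient of the space of (formal linear combinations of) $k$-labeled multigraphs by the kernel of the bilinear form $\langle F_1, F_2\rangle := f(F_1 F_2)$; semidefiniteness makes this quotient a genuine inner-product space, and the rank bound forces it to have dimension at most $q^k$. The gluing product descends to an associative, commutative algebra structure on the quotient at level $k$, and the key step is to show this quotient algebra is (isomorphic to) a product of at most $q^k$ copies of $\R$ — equivalently, that its idempotents decompose the identity into $q^k$ minimal pieces. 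The rank-at-most-$q^k$ hypothesis, combined with multiplicativity and the compatibility of the levels under merging labels, is exactly what should pin down a node set of size $q$: the idempotents at level $1$ correspond to the $q$ nodes of $H$, their $f$-values give the node weights $\alpha_i$, and evaluating $f$ on the single edge joining two labeled nodes, projected onto the idempotents, recovers the edge weights $\beta_{ij}$. One then verifies that $f(F) = \hom(F, H)$ on all multigraphs by checking it reduces correctly on the algebra generators.

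The hard part will be proving that the rank bound genuinely yields an \emph{exact} decomposition into $q$ nodes rather than merely an upper bound on some abstract dimension, and in particular that the idempotent structures at different levels $k$ are consistent (so that the same $q$ nodes serve all levels). This requires a careful multiplicativity-and-labeling argument to identify the level-$k$ algebra with the $k$-fold tensor power of the level-$1$ algebra, after which the minimal-idempotent count at level $1$ is forced to be exactly $q$. I would lean here on the finite-dimensionality granted by the rank hypothesis to invoke the structure theory of commutative semisimple $\R$-algebras, and on the established multigraph analogue of the reflection-positivity machinery (the techniques underlying Property \ref{P5} and its multigraph extension) to guarantee that the reconstructed edge and node weights are real and independent of which labeled representatives were chosen.
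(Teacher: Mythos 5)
Your necessity direction is fine: writing $f(F_1F_2)=\sum_{\phi:[k]\to V(H)}\bigl(\prod_i\alpha_{\phi(i)}\bigr)\hom_\phi(F_1,H)\hom_\phi(F_2,H)$ does exhibit $M(f,k)$ as a sum of $q^k$ positively weighted rank-one matrices, hence positive semidefinite of rank at most $q^k$. Be aware, though, that this paper never proves Theorem \ref{THM:FLS}; it quotes it from \cite{FLS}, and the related results it does prove (Theorem \ref{genfls}, Theorem \ref{genfls2}) are obtained by a completely different, analytic route: first represent $f$ by a graphon via the moment-problem argument (Theorem \ref{chr-t2}), then use the finite-rank criteria of Theorem \ref{step} to force stepfunction structure. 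So your sketch has to be judged as a reconstruction of the original Freedman--Lov\'asz--Schrijver algebraic argument.

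Judged that way, the sufficiency half has a genuine gap --- in fact an error --- at exactly the step you flag as ``the hard part.'' The level-$k$ algebra $\QQ_k/f$ is \emph{not} the $k$-fold tensor power of the level-$1$ algebra, and the basic idempotents of $\QQ_1/f$ do \emph{not} correspond to the nodes of $H$: automorphisms of $H$ destroy both claims. Take $f=\hom(\cdot,K_2)$. By Lemma \ref{LEM:OLD-RK}(a), $\dim(\QQ_k/f)$ counts ${\rm Aut}(H)$-orbits of maps $[k]\to V(H)$, so here $\dim(\QQ_1/f)=1$ (the two nodes are swapped by an automorphism) while $\dim(\QQ_2/f)=2$; thus $\dim(\QQ_2/f)>(\dim \QQ_1/f)^2$, the tensor-power identification fails, and the minimal-idempotent count at level $1$ is $1$, not $q=2$. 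Worse, your reconstruction recipe applied at level $1$ outputs the one-node graph with $\alpha=f(K_1)=2$ and loop weight $\beta=f(K_2)/\alpha^2=1/2$, which gives $\hom(C_3,\cdot)=1$, whereas $f(C_3)=\hom(C_3,K_2)=0$; so the recovered $H$ is simply wrong. This automorphism obstruction is precisely what the actual FLS proof is organized around: the nodes of $H$ must be extracted not from $\QQ_1/f$ but from the basic idempotents of $\QQ_{k+1}/f$ lying \emph{above} a suitably chosen (generic/rigid) basic idempotent of $\QQ_k/f$ for large $k$, with a counting argument that uses the bound $\rk(M(f,k))\le q^k$ for all $k$ simultaneously to control how idempotents split from one level to the next. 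Without some such device, the ``consistency of idempotent structures across levels'' you appeal to is not merely unproved --- it is false as stated.
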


In this paper we prove extensions of this theorem. To state our
results, we need the notion of a {\it randomly weighted graph}: a
graph whose nodes are weighted with nonnegative real numbers, and
edges are weighted by random variables with values from a finite set
of real numbers. A weighted graph is a special case when all these
distributions are concentrated on a single value. We say that the
randomly weighted graph is proper, if it is not an ordinary weighted
graph.

Multigraph moments $t(F,H)$ of a randomly weighted graph $H$ can be
defined; they will be multiplicative, normalized, reflection positive
graph parameters.

Our main result (Theorem \ref{genfls2}) describes multigraph
parameters $f$ that are multiplicative, normalized, reflection
positive, and whose second connection matrix $M(f,2)$ has finite rank
(it is enough to require the finiteness of certain very simple
submatrices). The theorem gives two alternatives: such a graph
parameter is either

\smallskip

--- of the form $t(.,H)$ for some weighted graph $H$, in which case
$\rk(M(f,k))^{1/k}\to c\ge 1$ as $k\to\infty$, or

\smallskip

--- of the form $t(.,H)$ for some proper randomly weighted graph
$H$, in which case $\rk(M(f,k))^{1/k^2}\to c>1$ as $k\to\infty$.

\smallskip

In particular, the finiteness of the rank of $M(f,2)$ implies the
finiteness of the ranks of all higher connection matrices $M(f,k)$.

\section{Preliminaries}

\subsection{Graphs and homomorphisms}\label{homom}

We consider four types of graphs. A {\it simple graph} is a finite
undirected graph without loops or multiple edges. In a {\it
multigraph} multiple edges are allowed but loop edges are excluded.
The edge set $E(G)$ of a multigraph $G$ is a multiset of unordered
pairs $ij$ where $i,j$ are distinct elements of the node set. A {\it
weighted graph} $H$ on node set $V=V(H)$ is given by an assignment of
positive nodeweights $(\alpha_i:~i\in V)$ and an assignment of real
edgeweights $\beta_{ij}:~i, j\in V)$. We consider $i,j\in V$ as
adjacent if $\beta_{ij}\not=0$. Note that we allow loop edges in
weighted graphs, but if $\beta_{i,i}=0$ for all $1\leq i\leq n$, then
we say that $H$ is loopless. Every multigraph $F$ can be considered
as a weighted graph with nodeweights $1$ and nonnegative integral
edgeweights (multiplicities) $F_{i,j}$.

We say that $H$ is a {\it randomly weighted graph} if its nodes are
weighted by nonnegative real numbers $\alpha_i$, and its edges are
weighted by independent random variables $B_{i,j}$ with finite
distribution. We can also think of randomly weighted graphs as graphs
whose edges are labeled by moment sequences of random variables with
a finite range, showing that these are discrete versions of
$[-d,d]$-graphons.

Also note that ordinary weighted graphs can be regarded as randomly
weighted graphs in which the edgeweights are single-valued random
variables. An important parameter of randomly weighted graph $H$ will
be $p_{i,j}$, the number of values $B_{ij}$ takes with positive
probability, and $p(H)$, the maximum of the $p_{i,j}$. ordinary
weighted graphs are just those random weighted graphs with $p(H)=1$.

Throughout this paper, if we say just {\it graph}, we mean a
multigraph.

For an arbitrary multigraph $F$ and weighted graph $H$, the
homomorphism number from $F$ to $H$ is defined by
\begin{equation}\label{homform}
\hom (F,H)=\sum_{\varphi:V(F)\to V(H)}~\prod_{i\in
V(F)}\alpha_{\varphi(i)}\prod_{(i,j)\in E(F)
}\beta_{\varphi(i),\varphi(j)}.
\end{equation}
Sometimes it is convenient to normalize the graph parameter
$\hom(G,H)$ and to introduce the {\it homomorphism density}
\[
t(F,H)=\frac{\hom (F,H)}{\bigl(\sum_i\alpha_i\bigr)^{|V(F)|}}.
\]
Note that $t(F,H)=\hom (F,H')$ where $H'$ is obtained from $H$ by
dividing the node weights by $\alpha$. A weighted graph is called
{\it normalized} if the sum of its node weights is $1$.

For an arbitrary graph $F$ with $m$ nodes we define an injective
version of these numbers by the formula
\[
\inj(F,H)=\sum_{\varphi:\,V(F)\hookrightarrow V(H)}~\prod_{(i,j)\in
E(F)}\beta_{\varphi(i),\varphi(j)},
\]
where $\varphi$ ranges over all injective functions from $V(F)$ to
$V(H)$. Again, we can normalize to get
\[
t_\inj(F,H)=\frac{\inj(F,H)}{\sigma_{|V(F)|}(\alpha)},
\]
where $\sigma_k(\alpha)$ denotes the $k$-th elementary symmetric
polynomial of the $\alpha_i$.

For a randomly weighted graph we define the homomorphism number
$\hom(F,H)$ as
\begin{equation}\label{homtorwg}
\hom(F,H)=\sum_{\varphi:\,V(F)\to V(H)}~\prod_{i\in
V(F)}\alpha_{\varphi(i)}\prod_{ij\in E(F)}
\E(B_{\varphi(i),\varphi(j)}^{F_{i,j}}).
\end{equation}
Setting $\beta_{i,j,k}=\E(B_{ij}^k)$, we have
\[
\hom(F,H)=\sum_{\varphi:\,V(F)\to V(H)}~\prod_{i\in
V(F)}\alpha_{\varphi(i)}\prod_{ij\in E(F)}
\beta_{\varphi(i),\varphi(j),F_{i,j}}.
\]
Similarly as before, we introduce the scaled version
\[
t(F,H)=\frac{\hom(F,H)}{\bigl(\sum_i\alpha_i\bigr)^{|V(F)|}}.
\]

\begin{remark}\label{REM:RW}
It is not quite evident where to put the expectation in
\eqref{homtorwg}. Moving it further in like
\[
\sum_{\varphi:\,V(F)\to V(H)}~\prod_{i\in
V(F)}\alpha_{\varphi(i)}\prod_{ij\in E(F)}
\E(B_{\varphi(i),\varphi(j)})^{A_{i,j}}
\]
would of course just reduce the issue to an ordinary weighted graph,
where each random variable $B_{i,j}$ is replaced by its expectation.
Moving it further out like
\[
\E\Bigl(\sum_{\varphi:\,V(F)\to V(H)}~\prod_{i\in
V(F)}\alpha_{\varphi(i)}\prod_{ij\in E(F)}
B_{\varphi(i),\varphi(j)}^{A_{i,j}}\Bigr)
\]
would destroy multiplicativity.
\end{remark}

With every normalized randomly weighted graph $H$ we can associate a
$[-d,d]$-graphon $W_H$, by splitting the unit interval into $|V(H)|$
intervals $S_i$ of length $\alpha_i$, and assigning the random
variable $B_{i,j}$ to each point in $S_i\times S_j$. This graphon
$W_H$ has two finiteness properties: $W_H(x,y)$ has a finite range
for all $x$ and $y$, and there are only a finite number of different
distributions $W_H(x,y)$. In the special case when $H$ is a weighted
graph, we get a function $W\in\WW$. It is easy to see that this
representation has the property that for every multigraph $F$,
$t(F,W_H)=\hom (F,H)=t(F,H)$.

\subsection{Quantum graphs and reflection positivity}

Let $\GG_n~~(n=0,1,2,\dots)$ denote the set of multigraphs in which
$n$ different nodes are labeled by the natural numbers
$\{1,2,\dots,n\}$ (the graphs may have an arbitrary number of
unlabeled nodes). Note that $\GG_0$ is the set of (isomorphism
classes of) graphs without labeled nodes. Let $\FF_n\subset\GG_n$
denote the set of graphs whose node set is $\{1,2,\dots,n\}$. For two
graphs $F_1,F_2\in\GG_n$ we define their product $F_1F_2$ as follows:
we take their disjoint union and then we identify nodes with
identical labels.

The set $\GG_n$ endowed with this multiplication forms a commutative
semigroup with a unit element in which $\FF_n$ is a sub-semigroup. We
denote by $\QQ_n$ the semigroup algebra $\R[\GG_n]$ and by $\PP_n$
the semigroup algebra $\R[\FF_n]$. The elements of these algebras are
formal linear combinations of (partially) labeled graphs, and for
this reason we call them {\em quantum graphs}.

Let us fix a number $n$ and let $z_{i,j}~(1\leq i<j\leq n)$ denote
the graph with $V(z_{i,j})=[n]$ with a single edge connecting $i$ and
$j$. It is clear that $\PP_n$ is generated freely by $\{z_{i,j}|1\leq
i<j\leq n\}$ as a commutative algebra and thus it is isomorphic to
the polynomial ring $\R[\{z_{i,j}|1\leq i<j\leq n\}]$. Note that the
monomials of this polynomial ring are in a one-to-one correspondence
with graphs in $\FF_n$.

A {\em graph parameter} is a map from the set of multigraphs to the
real numbers. Any graph parameter $f$ can be extended linearly to the
vector spaces $\QQ_n$ and $\PP_n$ for all $n\ge 0$. We say that $f$
is {\em reflection positive} (resp.\ {\em weakly reflection
positive}) if $f(p^2)\geq 0$ holds for all natural numbers $n$ and
quantum graphs $p\in\QQ_n$ (resp.\ $p\in\PP_n$).

Any graph parameter $f$, as we have seen, extends linearly to
$\QQ_n$. In addition, $f$ induces a bilinear form
$\langle.,.\rangle_f$ on $\QQ_n$ by $\langle p,q \rangle_f=f(pq)$.
This form has the property that $\langle pq,r \rangle_f=\langle p,qr
\rangle_f$. Note that the reflection positivity (resp.\ weak
reflection positivity) of $f$ is equivalent to the positive
semidefinitness of the bilinear forms $\langle.,.\rangle_f$ on the
algebras $\QQ_n$ (resp.\ $\PP_n$). Let
\[
\II(\PP_n,f)=\{x|x\in \PP_n~,~\langle x,\PP_n\rangle_f=0\}.
\]
It is clear that $\II(\QQ_n,f)$ is an ideal of the algebra $\QQ_n$,
and we can consider the factor $\QQ_n/f=\QQ_n/\II(\QQ_n,f)$. Clearly
$\dim(\QQ_n/f)$ is the rank of the bilinear form
$\langle.,.\rangle_f$ on $\QQ_n$. We can carry out these
constructions with $\PP_n$ instead of $\QQ_n$. In the case when
$f=\hom(.,H)$ for some randomly weighted graph $H$, we also denote
$\QQ_n/f$ by $\QQ_n/H$.

The algebras $\QQ_n/f$ and the numbers $\dim(\QQ_n/f)$ were
introduced in \cite{FLS}.

Basic properties of these algebras can also be expressed in terms of
certain matrices. The $n$-th {\em connection matrix} of a graph
parameter $f$ is an infinite matrix $M(n,f)$ whose rows and columns
are indexed by the elements of $\GG_n$ and the entry in the
intersection of the row corresponding to $F_1$ and column
corresponding to $F_2$ is $f(F_1F_2)$. The rank of this matrix is
equal to $\dim(\QQ_n/f)$, and this matrix is positive semidefinite if
and only if so is the bilinear form $\langle.,.\rangle_f$ on $\QQ_n$.

A graph parameter $f$ is {\it multiplicative} if $f(F_1\cup
F_2)=f(F_1)f(F_2)$ where $F_1\cup F_2$ is the disjoint union of $F_1$
and $F_2$. We call $f$ {\it normalized} if takes the value $1$ on a
single node. It is clear that $f$ is multiplicative and normalized if
and only if the induced map $f:~\QQ_0\to\R$ is an algebra
homomorphism. It is also easy to see that $f$ is multiplicative if
and only if $\dim(\QQ_0,f)\leq 1$.

\subsection{Semidefinite functions on polynomial rings}

Let $n$ be a fixed natural number and let $x_1,x_2,\dots x_n$ be
variables. A {\em polynomial expression} of $W$ in $n$ variables is a
polynomial of the functions $\{W(x_i,x_j)~|~1\leq i<j\leq n\}$. Note
that these $n$-variable functions form a commutative algebra with the
pointwise multiplication and addition. We define the {\em moment} of
$W$ corresponding to a polynomial expression $p(x_1,x_2,\dots,x_n)$
as
\[
\int_{[0,1]^n}p~dx_1\,\dots\,dx_n.
\]

The ring of the polynomial expressions of $W$ in $n$ variables is a
homomorphic image of $\PP_n$ where the homomorphism is given by
$z_{i,j}\to W(x_i,x_j)$. Composing the moment map with this
homomorphism we obtain a linear map $t_W:~ \PP_n\to \R$. Since the
moments of a polynomial expression are invariant under any
permutation of the variables, we obtain that for an element
$F\in\FF_n$ the value $t_W(F)=t(F,W)$ does not depend on the labeling
of $F$, only on its isomorphism class. For this reason we can also
regard $t_W$ as a graph parameter which carries all the information
about moments. Furthermore, it is easy to see that $t_W$ is
normalized and multiplicative. Let $n$ be an arbitrary natural number
and $p\in \PP_n$. Since the polynomial expression corresponding to
$p^2$ is the square of the polynomial expression corresponding to $p$
we get that $t_W(p^2)\geq 0,$ which shows that the graph parameter
$t_W$ is weakly reflection positive. If $W\in\WW(d)$ then
\[
t(K_2^k,W ) = \Bigl|\int_{(x,y)\in [0,1]^2}W(x,y)^k\,dx\,dy\Bigr|\leq
d^k,
\]
which is equivalent to $|t_W(z_{1,2}^k)|\leq d^k$.

Let $\alpha$ be an element of the dual space of
$\R[x_1,x_2,\dots,x_n]$. The map $\alpha$ is said to be {\it positive
semidefinite} if $\alpha(f^2)\geq 0$ for all $f\in
\R[x_1,x_2,\dots,x_n]$. We say that $\alpha$ is normalized if
$\alpha(1)=1$. The following theorem follows quickly from the theory
of semidefinite functions on Abelian semigroups.

\begin{theorem}\label{semfuncpol} Let $\alpha$ be a normalized, positive
semidefinite element of the dual space of $\R[x_1,x_2,\dots,x_n]$
such that $|\alpha(x_i^r)|\leq d^r$ for all natural numbers $1\leq
i\leq n$ and $0\leq r$ . Then there is a unique probability measure
$\mu$ on $[-d,d]^n$ with
\begin{equation}\label{mertek}
\alpha(f)=\int_{x\in [-d,d]^n}f(x)~d\mu.
\end{equation}
If the rank of the bilinear form given by $\langle f,g
\rangle=\alpha(fg)$ is finite then the measure $\mu$ is concentrated
on finitely many points.
\end{theorem}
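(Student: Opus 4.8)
The plan is to recognize $\alpha$ as a positive definite function on the Abelian $*$-semigroup $(\N^n,+)$ equipped with the identity involution, and then to invoke the integral representation theorem for exponentially bounded positive definite functions (Berg--Maserick \cite{BM}; see also \cite{BCR}). Writing $x^s=\prod_{i=1}^n x_i^{s_i}$ for a multi-index $s\in\N^n$, the monomials $x^s$ form a basis of $\R[x_1,\dots,x_n]$, and under $x^s\cdot x^t=x^{s+t}$ the ring is exactly the semigroup algebra $\R[\N^n]$. Restricting $\alpha$ to this basis gives a function $\varphi:\N^n\to\R$, $\varphi(s)=\alpha(x^s)$, and the hypothesis $\alpha(f^2)\ge 0$ says precisely that the matrix $(\varphi(s+t))_{s,t\in\N^n}$ is positive semidefinite, i.e. $\varphi$ is positive definite in the semigroup sense.

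Before applying the representation theorem I must upgrade the one-variable bounds $|\alpha(x_i^r)|\le d^r$ to a genuine exponential bound $|\varphi(s)|\le d^{|s|}$ (with $|s|=\sum_i s_i$) valid for every multi-index, since this is the absolute value that will control the support. Here the positive semidefiniteness does the work: Cauchy--Schwarz for the form $\langle f,g\rangle=\alpha(fg)$ gives $\alpha(fg)^2\le\alpha(f^2)\alpha(g^2)$, and taking $f=x_1^{s_1}$, $g=\prod_{i\ge 2}x_i^{s_i}$ and inducting on the number of variables yields $\varphi(s)^2\le \alpha(x_1^{2s_1})\,\alpha\bigl(\prod_{i\ge2}x_i^{2s_i}\bigr)\le d^{2s_1}\,d^{2\sum_{i\ge2}s_i}=d^{2|s|}$, using that $\alpha(\prod_{i\ge2}x_i^{2s_i})=\alpha(h^2)\ge 0$ is bounded by $d^{2\sum_{i\ge2}s_i}$ via the inductive hypothesis applied to the positive semidefinite restriction of $\alpha$ to $\R[x_2,\dots,x_n]$. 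Thus $\varphi$ is exponentially bounded by the absolute value $s\mapsto d^{|s|}$.

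With this in hand the representation theorem supplies a finite measure on the set of $d^{|s|}$-bounded semicharacters of $(\N^n,+)$. These semicharacters are exactly the evaluations $\rho_x(s)=x^s$ for $x\in\R^n$, and the bound $|\rho_x(s)|\le d^{|s|}$ forces $|x_i|\le d$; hence the representing measure $\mu$ lives on $[-d,d]^n$ and satisfies \eqref{mertek}. Normalization $\alpha(1)=1$ makes its total mass $\varphi(0)=1$, so $\mu$ is a probability measure. Uniqueness is then immediate from compactness: by Stone--Weierstrass polynomials are dense in $C([-d,d]^n)$, so $\mu$ is determined by the integrals $\int x^s\,d\mu=\alpha(x^s)$, i.e. by $\alpha$ alone.

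For the last claim, note that $f$ lies in the kernel of $\langle\cdot,\cdot\rangle$ iff $\alpha(f^2)=\int f^2\,d\mu=0$, i.e. iff $f=0$ $\mu$-almost everywhere; hence the rank of the form equals the dimension of the image of $\R[x_1,\dots,x_n]$ in $L^2(\mu)$. Since $\mu$ has compact support, polynomials are dense in $L^2(\mu)$, so if this rank equals a finite $r$ the image is a closed $r$-dimensional subspace that is simultaneously dense, forcing $\dim L^2(\mu)=r<\infty$; a measure whose $L^2$ space is finite-dimensional is supported on at most $r$ points, which is the assertion. I expect the only genuine obstacle to be the bookkeeping in invoking the semigroup representation theorem --- in particular verifying the exponential-boundedness hypothesis in exactly the form the theorem demands --- since everything else is the standard compact-support moment machinery.
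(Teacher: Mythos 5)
Your proof is correct and takes essentially the same route as the paper: both upgrade the single-variable bounds to the full exponential bound $|\alpha(x^s)|\le d^{|s|}$ by induction on the number of variables (your Cauchy--Schwarz step is the paper's $\beta((p\pm x_{i+1}^{r_{i+1}})^2)\ge 0$ expansion in slightly cleaner form) and then invoke the Berg--Maserick representation theorem for exponentially bounded positive definite functions on the semigroup $\N_0^n$. The only difference is that you spell out the uniqueness (Stone--Weierstrass) and the finite-rank-implies-finite-support argument (via $\dim L^2(\mu)$), details the paper delegates entirely to the cited theorem.
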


\begin{proof}
Let us introduce the linear function $\beta:
\R[x_1,x_2,\dots,x_n]\to\R$ by
\[
\beta(x_1^{r_1}x_2^{r_2}\dots x_n^{r_n})=d^{-(r_1+r_2+\dots
+r_n)}\alpha(x_1^{r_1}x_2^{r_2}\dots x_n^{r_n}).
\]
It is easy to see that $\beta$ is a positive semidefinite function
and $|\beta(x_i^r)|\leq 1$ for all $i$ and $r$. We show that
$|\beta(x_1^{r_1}x_2^{r_2}\dots x_n^{r_n})|\leq 1$. We do it by
induction on the index of the last nonzero $r_i$. Assume that
$|\beta(x_1^{r_1}x_2^{r_2}\dots x_i^{r_i})|\leq 1$ for all possible
sequences $r_1,r_2,\dots,r_i$. Let $p=x_1^{r_1}x_2^{r_2}\dots
x_i^{r_i}$. It follows by the positive semidefinitness of $\beta$
that
\[
\beta((p\pm x_{i+1}^{r_{i+1}})^2)\geq 0
\]
and so
\[
2\geq\beta(p^2)+\beta(x_{i+1}^{2r_{i+1}})\geq \pm 2px_{i+1}^{r_{i+1}}
\]
which implies $1\geq |px_{i+1}^{r_{i+1}}|$. As a consequence we get
that \[|\alpha(x_1^{r_1}x_2^{r_2}\dots x_n^{r_n})|\leq
d^{r_1+r_2+\dots +r_n}.\] This means that $\alpha$ is an
exponentially bounded positive semidefinite function on the semigroup
of the monomials which is isomorphic to $\N_0^n$. Now 2.5.Theorem
(...) completes the proof.
\end{proof}

Note that any probability measure $\mu$ on $[-d,d]^n$ defines a
semidefinite function $\alpha$ on $\R[x_1,x_2,\dots,x_n]$ by
(\ref{mertek}).

We will need two further well-known facts.

\begin{lemma}\label{rankofbf}
Assume that the map $\alpha:\R[x_1,x_2,\dots,x_n] \mapsto\R$ is
defined by
\[
\alpha(p)=\sum_{i=1}^k h_ip(a_i),
\]
where $a_1,\dots,a_k\in\R^n$ are different real vectors, and the
weights $h_i$ are positive real numbers.  Then $\alpha$ is a positive
semidefinite function and the corresponding bilinear form
\[
\langle p_1,p_2 \rangle
=\alpha(p_1p_2)~~(p_1,p_2\in\R[x_1,x_2,\dots,x_n])
\]
has rank $k$.
\end{lemma}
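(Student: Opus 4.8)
The positive semidefiniteness is essentially immediate, so the entire content of the lemma is the rank computation. The plan is to factor the bilinear form through a finite-dimensional evaluation map and thereby reduce the rank statement to multivariate Lagrange interpolation at the $k$ distinct points. First I would dispose of positive semidefiniteness: for every $p$ we have $\alpha(p^2)=\sum_{i=1}^k h_i\,p(a_i)^2\ge 0$, since each $h_i>0$ and each $p(a_i)^2\ge 0$.

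Next I would introduce the evaluation map $\mathrm{ev}:\R[x_1,\dots,x_n]\to\R^k$, $\mathrm{ev}(p)=(p(a_1),\dots,p(a_k))$, together with the positive definite diagonal matrix $H=\diag(h_1,\dots,h_k)$. Then $\langle p_1,p_2\rangle=\sum_{i=1}^k h_i\,p_1(a_i)p_2(a_i)=\mathrm{ev}(p_1)\T H\,\mathrm{ev}(p_2)$, so the form is the pullback under $\mathrm{ev}$ of the inner product on $\R^k$ determined by $H$. A short linear-algebra argument then identifies the radical of $\langle\cdot,\cdot\rangle$ with $\ker(\mathrm{ev})$: if $\mathrm{ev}(p)=0$ then $p$ clearly lies in the radical, and conversely, if $p$ is in the radical then $\mathrm{ev}(p)$ is $H$-orthogonal to the image $\mathrm{ev}(\R[x_1,\dots,x_n])$; but $\mathrm{ev}(p)$ itself lies in that image, on which $H$ restricts to a positive definite form, forcing $\mathrm{ev}(p)=0$. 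Consequently the rank of the form equals $\dim\mathrm{ev}(\R[x_1,\dots,x_n])$, which is automatically at most $k$.

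It remains to prove that $\mathrm{ev}$ is surjective, i.e.\ that $\dim\mathrm{ev}(\R[x_1,\dots,x_n])=k$; this is the heart of the argument. Because the $a_i$ are \emph{distinct} vectors, for each $i$ and each $j\neq i$ I can pick a coordinate $m=m(i,j)$ with $(a_i)_m\neq(a_j)_m$ and form the affine polynomial $g_{ij}(x)=x_m-(a_j)_m$, which vanishes at $a_j$ but not at $a_i$. Setting $p_i=\prod_{j\neq i} g_{ij}/g_{ij}(a_i)$ produces a Lagrange-type indicator with $p_i(a_i)=1$ and $p_i(a_l)=0$ for $l\neq i$, so that $\mathrm{ev}(p_i)=e_i$. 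Thus $\mathrm{ev}$ hits every standard basis vector, hence is surjective, and the rank of the form is exactly $k$.

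The only genuinely nontrivial step is the surjectivity of $\mathrm{ev}$, that is, the existence of interpolating polynomials separating distinct points of $\R^n$; everything else is formal linear algebra, which is presumably why the paper flags the statement as well known. No obstacle of substance arises, since distinctness of the $a_i$ is exactly what the interpolation construction needs.
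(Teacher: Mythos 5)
Your proof is correct and complete. Note, however, that the paper itself offers no proof of Lemma~\ref{rankofbf} at all: it is introduced there as one of two ``well-known facts'' and used as a black box (in the proof of Lemma~\ref{exactrank}), so there is no argument in the paper to compare yours against. Your write-up supplies exactly the standard argument one would expect behind the citation: positive semidefiniteness is the one-line computation $\alpha(p^2)=\sum_i h_i p(a_i)^2\ge 0$; the form factors as $\langle p_1,p_2\rangle=\mathrm{ev}(p_1)\T H\,\mathrm{ev}(p_2)$ through the evaluation map $\mathrm{ev}(p)=(p(a_1),\dots,p(a_k))$ with $H=\diag(h_1,\dots,h_k)$ positive definite, so the radical is $\ker(\mathrm{ev})$ (taking $q=p$ in the radical condition already forces $\sum_i h_i p(a_i)^2=0$, hence $\mathrm{ev}(p)=0$) and the rank equals $\dim\mathrm{ev}(\R[x_1,\dots,x_n])$; and surjectivity of $\mathrm{ev}$ follows from your Lagrange-type interpolants $p_i=\prod_{j\neq i}g_{ij}/g_{ij}(a_i)$, which exist precisely because the points $a_i$ are distinct. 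All three steps check out, so your proposal can stand as the proof the paper leaves implicit.
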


\begin{lemma}\label{konvmet}
Let $d>0$ be a fixed real number. A sequence of measures
$\mu_1,\mu_2,\dots$ on $[-d,d]^n$ is weakly convergent if and only if
$\lim_{k\to\infty}\int f d\mu_k$ exists for every monomial $f$.
\end{lemma}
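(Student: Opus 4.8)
The forward implication is immediate: each monomial $f$ is a continuous function on the compact cube $[-d,d]^n$, so if $\mu_k\to\mu$ weakly then $\int f\,d\mu_k\to\int f\,d\mu$, and in particular $\lim_k\int f\,d\mu_k$ exists. The plan is therefore to prove the converse: assuming $\lim_k\int f\,d\mu_k$ exists for every monomial $f$, I will produce a measure $\mu$ with $\mu_k\to\mu$ weakly. The first step is to extract a uniform mass bound. Applying the hypothesis to the constant monomial $f\equiv 1$ shows that $\mu_k([-d,d]^n)=\int 1\,d\mu_k$ converges, hence is bounded; set $M=\max\bigl(1,\sup_k\mu_k([-d,d]^n)\bigr)<\infty$. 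By linearity, $\lim_k\int p\,d\mu_k$ then exists for every polynomial $p$ in $x_1,\dots,x_n$.

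The second step upgrades convergence of moments to convergence against all continuous test functions. Fix $g\in C([-d,d]^n)$ and $\eps>0$. Since $[-d,d]^n$ is compact, Stone--Weierstrass gives a polynomial $p$ with $\|g-p\|_\infty<\eps/(3M)$. For all $k,l$,
\[
\Bigl|\int g\,d\mu_k-\int g\,d\mu_l\Bigr|\le\Bigl|\int(g-p)\,d\mu_k\Bigr|+\Bigl|\int p\,d\mu_k-\int p\,d\mu_l\Bigr|+\Bigl|\int(p-g)\,d\mu_l\Bigr|.
\]
The first and third terms are each at most $\|g-p\|_\infty\,M<\eps/3$, while the middle term is $<\eps/3$ once $k,l$ are large, the sequence $\bigl(\int p\,d\mu_k\bigr)_k$ being convergent and hence Cauchy. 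Thus $\bigl(\int g\,d\mu_k\bigr)_k$ is Cauchy and converges to a limit $L(g)$. The functional $L\colon C([-d,d]^n)\to\R$ is plainly linear and bounded by $M\|g\|_\infty$, and positive (for the positive measures occurring in our applications, $g\ge 0$ forces $\int g\,d\mu_k\ge 0$). By the Riesz representation theorem there is a measure $\mu$ on $[-d,d]^n$ with $L(g)=\int g\,d\mu$ for every $g$, which is precisely the assertion $\mu_k\to\mu$ weakly.

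The step carrying the real content is the passage from ``all moments converge'' to ``all continuous integrals converge,'' and the point I expect to need the most care is that this rests on two ingredients being available \emph{simultaneously}: the uniform mass bound $M$ (so that the polynomial approximation error $\|g-p\|_\infty$ is controlled uniformly in $k$) and the Stone--Weierstrass density of polynomials, both of which ultimately come from the compactness of $[-d,d]^n$. An equivalent and slightly slicker route avoids the explicit $\eps/3$ estimate: the mass bound makes $(\mu_k)$ relatively compact in the weak-$*$ topology by Banach--Alaoglu (with $C([-d,d]^n)$ separable, the relevant ball is sequentially compact), every weak limit point has its integrals against monomials pinned down by the given limits, and a compactly supported measure is determined by its moments (again Stone--Weierstrass); hence all subsequential limit points coincide and the full sequence converges.
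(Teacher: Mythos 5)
Your proof is correct; note that the paper itself offers no proof of this lemma, stating it as one of two ``well-known facts,'' so your argument simply supplies the standard justification the authors take for granted. Both of your routes (the explicit $\eps/3$ argument via Stone--Weierstrass and the uniform mass bound, and the Banach--Alaoglu compactness argument combined with the fact that moments determine a compactly supported measure) are sound. One caveat worth making explicit: the statement is only true for \emph{nonnegative} measures --- for signed measures of unbounded total variation, convergence of all moments does not imply weak convergence (e.g.\ $\mu_k=k(\delta_{1/k}-\delta_0)$ on $[-1,1]$ has convergent moments but $\int g\,d\mu_k$ diverges for suitable non-differentiable continuous $g$) --- and your mass bound from the constant monomial, as well as the positivity of the limiting functional $L$, both use this; in the paper's application the $\mu_k$ are probability measures, so this is harmless, but the restriction should be stated rather than relegated to a parenthesis.
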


\subsection{Two-variable functions as operators}

Any bounded symmetric function $W$ on $[0,1]^2$ gives rise to a
symmetric integral kernel operator $T_W$ on the Hilbert space
$L_2([0,1])$, by
\[
T_W(f)(x)=\int_0^1 W(y,x)f(y)~dy.
\]
It follows from the Hilbert-Smith condition that such an operator is
always compact, and so it has a countable set of nonzero eigenvalues
$\{\lambda_1,\lambda_2,\lambda_3\dots\}$, where we may assume that
$|\lambda_1|\ge|\lambda_2|\ge\dots$. It is known that $\lambda_k\to
0$, and so every nonzero eigenvalue has finite multiplicity. We will
need the well-known fact that for $n\ge 2$,
\begin{equation}\label{EQ:CNLAMBDA}
t(C_n,W)=\sum_{k=1}^{\infty}\lambda_k^n.
\end{equation}

The operator rank of $T_W$ and the matrix rank of $C(T_W)$ are either
both infinite or both finite. More exactly,

\begin{lemma}\label{LEM:TW-RANK}
The rank of $C(t_W)$ is between the number of different nonzero
eigenvalues and the number of all nonzero eigenvalues of $T_W$.
\end{lemma}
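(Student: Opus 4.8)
The crux is the spectral formula \eqref{EQ:CNLAMBDA}. The plan is to read off the entries of $C(t_W)$ from the eigenvalues of $T_W$ and then recognize the resulting matrix as a power-sum (Hankel/moment) matrix, whose rank is governed entirely by the \emph{distinct} nonzero eigenvalues. Write the distinct nonzero eigenvalues of $T_W$ as $\mu_1,\mu_2,\dots$ and let $r_l$ be the multiplicity of $\mu_l$; each $r_l$ is finite because $T_W$ is compact. By \eqref{EQ:CNLAMBDA} the $(i,j)$ entry of $C(t_W)$ is the cycle density $t(C_{i+j},W)=\sum_k\lambda_k^{i+j}=\sum_l r_l\mu_l^{i+j}$, the series converging absolutely since $\lambda_k\to0$ and $\sum_k\lambda_k^2<\infty$. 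Thus $C(t_W)$ is exactly the Hankel matrix of the power sums $s_n=\sum_k\lambda_k^n$, and the task reduces to bounding its rank.

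For the upper bound I would write $C(t_W)$ as a sum of rank-one matrices indexed by the nonzero eigenvalues counted with multiplicity: setting $v_k=(\lambda_k,\lambda_k^2,\dots)\T$, the identity $C(t_W)_{ij}=\sum_k\lambda_k^i\lambda_k^j$ says $C(t_W)=\sum_k v_k v_k\T$. If $T_W$ has $r<\infty$ nonzero eigenvalues (with multiplicity), this is a finite sum of $r$ rank-one matrices, so $\rk(C(t_W))\le r$; if $r=\infty$ the bound is vacuous. This yields the upper bound, the number of all nonzero eigenvalues.

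For the lower bound I would use the grouped form $C(t_W)_{ij}=\sum_l r_l\mu_l^i\mu_l^j$ and factor it as $C(t_W)=U\,\diag(r_l)\,U\T$ with $U_{il}=\mu_l^i$ ($i\ge1$, $l$ indexing the distinct eigenvalues). Any square block of $U$ formed from rows $i=1,\dots,s$ and $s$ distinct columns is a Vandermonde matrix in the nonzero numbers $\mu_l$, hence nonsingular; so $U$ has rank equal to the number of distinct nonzero eigenvalues, and since $\diag(r_l)$ is nonsingular (the $r_l$ are positive), $\rk(C(t_W))$ is at least that number. Equivalently one may invoke Lemma~\ref{rankofbf}: with $\alpha=\sum_l r_l\delta_{\mu_l}$ one has $C(t_W)_{ij}=\alpha(x^{i}x^{j})$, and since the $\mu_l$ are nonzero the form $\langle p,q\rangle=\alpha(xp\cdot xq)$ has rank equal to the number of points $\mu_l$. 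Together the two estimates place $\rk(C(t_W))$ between the number of distinct and the number of all nonzero eigenvalues (the Vandermonde argument in fact pins it to the former, which lies in the stated interval).

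The main obstacle is the bookkeeping in the infinite-dimensional case. One must fix a working definition of the rank of the infinite matrix $C(t_W)$ (the dimension of its row space, equivalently the supremum of the ranks of its finite principal submatrices), justify the absolute convergence of the defining series, and apply the finite-point Lemma~\ref{rankofbf} through truncation, restricting to any finite set of distinct eigenvalues and the corresponding Vandermonde block. The remaining consistency point, that the two bounds are simultaneously finite or simultaneously infinite, follows because each nonzero eigenvalue of the compact operator $T_W$ has finite multiplicity, so finitely many distinct nonzero eigenvalues forces finitely many in total, and conversely.
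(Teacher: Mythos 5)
Your proposal is correct, and its upper bound is the same as the paper's: both decompose $C(t_W)=\sum_k v_kv_k\T$ with one rank-one term per nonzero eigenvalue counted with multiplicity. The lower bound, however, is a genuinely different argument. The paper proceeds by contradiction: if $\rk(C(t_W))=n<\infty$, a linear dependence among the first $n+1$ columns gives $\sum_k p(\lambda_k)\lambda_k^{i+1}=0$ for all $i$, where $\deg p\le n$; letting $i\to\infty$ and separating even and odd $i$ (to handle the possible pair $\pm\lambda_r$ of largest modulus among eigenvalues with $p(\lambda_k)\ne 0$) forces $p(\lambda_k)=0$ for every $k$, so the distinct nonzero eigenvalues, being roots of $p$, number at most $n$. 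You instead factor $C(t_W)=U\,\diag(r_l)\,U\T$ over the distinct eigenvalues $\mu_l$ and use Vandermonde blocks of $U$. Your route avoids the asymptotic bookkeeping entirely and in fact proves something sharper: writing $C(t_W)=BB\T$ with $B=U\,\diag(r_l^{1/2})$, whose rows lie in $\ell^2$ because $\sum_k\lambda_k^{2i}<\infty$ for $i\ge 1$, the matrix $C(t_W)$ is the Gram matrix of the rows of $B$, so $\rk(C(t_W))=\rk(B)=\rk(U)$ equals the number of distinct nonzero eigenvalues exactly, whereas the paper only sandwiches it.

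One step needs repair as written: you justify $\rk(U\,\diag(r_l)\,U\T)\ge\rk(U)$ by the nonsingularity of $\diag(r_l)$, but nonsingularity is not the operative property; with $U=(1\ \ 1)$ and $D=\diag(1,-1)$ one has $UDU\T=0$ even though $D$ is nonsingular and $\rk(U)=1$. Likewise, your truncation device of restricting to finitely many distinct eigenvalues produces a matrix that is \emph{not} a submatrix of $C(t_W)$ (every entry of $C(t_W)$ involves all eigenvalues), so Lemma \ref{rankofbf} applied to the truncated measure says nothing about $C(t_W)$ by itself. Both steps are rescued by the positivity of the $r_l$, which you mention only parenthetically: either use the Gram identity above, or note that the discarded eigenvalues contribute a positive semidefinite matrix and that $A\succeq B\succeq 0$ implies $\rk(A)\ge\rk(B)$, so every $s\times s$ corner of $C(t_W)$ has rank at least that of the rank-$s$ matrix $\sum_{l\le s} r_l w_lw_l\T$ with $w_l=(\mu_l,\dots,\mu_l^s)\T$. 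Make positivity, not invertibility, carry the argument and the proof is complete.
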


\begin{proof}
By \eqref{EQ:CNLAMBDA},
\[
C(t_W)_{i,j}= \sum_k \lambda_k^{i+j}=\sum_k m_k\bar\lambda_k^{i+j}.
\]
If this sum is finite, i.e., $\rk(T_W)=m$ is finite, then $C(t_W)$ is
the sum of $m$ matrices of rank $1$, and so it has rank at most $m$.

Conversely, suppose that $C(t_W)$ has finite rank $n$. Then there is
a linear dependence between its first $n+1$ columns, which means that
we have a relation
\[
\sum_{j=1}^{n+1} a_j \sum_k\lambda_k^{i+j} =0
\]
valid for all $i\ge 1$. We can rewrite this as
\[
\sum_k p(\lambda_k) \lambda_k^{i+1} = 0,
\]
where $p$ is a polynomial of degree at most $n$.

We claim that $p(\lambda_k)=0$ for all $k$. Suppose not, and let $r$
be the first index for which $p(\lambda_r)\not=0$, and let $a$ and
$b$ be the multiplicities of the eigenvalues $\lambda_r$ and
$-\lambda_r$ ($a\ge 1, b\ge 0$). Then we have
\[
a p(\lambda_r) + (-1)^{i+1} b p(-\lambda_r) =
-\sum_{k:|\lambda_k|<|\lambda_r|} p(\lambda_k)
\Bigl(\frac{\lambda_k}{\lambda_r}\Bigr)^{i+1}.
\]
Here the right hand side tens to $0$ as $i\to\infty$, implying that
$a+b=0$ and also $a-b=0$, which is a contradiction.

So every nonzero eigenvalue of $T_W$ is a root of $p$, which means
that their number is at most $\deg(p)\le n$.
\end{proof}

This implies

\begin{corollary}\label{finiterank}
Let $W\in\WW(d)$ and assume that $C(t_W)$ has finite rank. Then the
kernel operator $T_W$ is of finite rank and there is a finite
sequence of pairwise orthogonal functions $g_1,g_2,\dots,g_k\in
L_2([0,1])$ and numbers $\nu_i\in\{d,-d\}$ such that
\[
W(x,y)=\sum_{i=1}^k \nu_ig_i(x)g_i(y)
\]
almost everywhere on $[0,1]^2$.
\end{corollary}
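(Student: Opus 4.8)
The plan is to combine the rank bound of Lemma \ref{LEM:TW-RANK} with the spectral theorem for compact self-adjoint operators. First I would use the assumption that $C(t_W)$ has finite rank together with the left-hand inequality of Lemma \ref{LEM:TW-RANK}, which bounds the number of \emph{distinct} nonzero eigenvalues of $T_W$ from above by $\rk(C(t_W))$. Hence $T_W$ has only finitely many distinct nonzero eigenvalues. Since $T_W$ is compact, each of its nonzero eigenvalues has finite multiplicity (as recalled above, $\lambda_k\to 0$), so in fact $T_W$ has only finitely many nonzero eigenvalues counted with multiplicity; that is, $T_W$ is a finite-rank operator.

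Next I would invoke the spectral theorem. Because $W$ is bounded and symmetric, $T_W$ is a self-adjoint Hilbert--Schmidt operator, and being of finite rank it admits a finite orthonormal system $\phi_1,\dots,\phi_k\in L_2([0,1])$ of eigenfunctions with nonzero real eigenvalues $\lambda_1,\dots,\lambda_k$. The point I would check carefully is that the kernel identity
\[
W(x,y)=\sum_{i=1}^k \lambda_i\,\phi_i(x)\phi_i(y)
\]
holds almost everywhere on $[0,1]^2$, and not merely in the operator sense. This follows because the Hilbert--Schmidt kernel is determined, as an $L_2([0,1]^2)$ function, uniquely by the operator it induces, and both $W$ and the finite sum on the right induce the operator $T_W$.

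To rewrite the coefficients in the required form $\nu_i\in\{d,-d\}$, I would use the a priori bound $|\lambda_i|\le\|T_W\|\le\|W\|_{L_2([0,1]^2)}\le d$, which is valid since $|W|\le d$ on a probability space (the operator norm is dominated by the Hilbert--Schmidt norm). Setting $\nu_i=d$ when $\lambda_i>0$ and $\nu_i=-d$ when $\lambda_i<0$, and $g_i=\sqrt{|\lambda_i|/d}\,\phi_i$, the functions $g_i$ remain pairwise orthogonal, and $\lambda_i\,\phi_i(x)\phi_i(y)=\nu_i\,g_i(x)g_i(y)$, which gives the claimed representation of $W$.

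I expect the main (if modest) obstacle to be justifying the almost-everywhere validity of the spectral kernel expansion rather than mere $L_2$ or operator-level equality; this is exactly where the finiteness of the rank is essential, since the expansion then terminates after finitely many terms and no subtle convergence question arises. Everything else reduces to the rank transfer supplied by Lemma \ref{LEM:TW-RANK} and the elementary rescaling $g_i=\sqrt{|\lambda_i|/d}\,\phi_i$.
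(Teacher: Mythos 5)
Your proposal is correct and follows exactly the route the paper intends: the paper derives the corollary directly from Lemma \ref{LEM:TW-RANK} (finitely many distinct nonzero eigenvalues, hence finite rank by compactness and finite multiplicities) together with the spectral decomposition of the finite-rank kernel, with the $\nu_i\in\{d,-d\}$ normalization obtained by the same rescaling $g_i=\sqrt{|\lambda_i|/d}\,\phi_i$ you describe. Your extra care about the almost-everywhere (rather than merely operator-level) identity, justified by injectivity of the map $W\mapsto T_W$ on $L_2([0,1]^2)$, is exactly the right supporting observation, which the paper leaves implicit.
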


The product of two operators $T_{W_1}$ and $T_{W_2}$ is $T_{W_1\circ
W_2}$ where $W_1\circ W_2$ is given by
\[
(W_1\circ W_2)(x,y)=\int_0^1 w_1(x,z)w_2(z,y)\,dz\,.
\]
Let $F'$ denote the graph which is obtained from $F$ by subdividing
each edge in $E(G)$. It will be useful to note that

\begin{lemma}\label{subdiv}
If $W\in\WW$ and $F$ is any graph then $t(F',W)=t(F,W\circ W)$.
\end{lemma}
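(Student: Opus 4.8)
The plan is to write out both sides directly from the definition \eqref{EQ:T-DEF} and pass from $t(F',W)$ to $t(F,W\circ W)$ by integrating out the subdivision vertices one at a time via Fubini's theorem. First I would fix notation for $F'$: write $V(F')=V(F)\cup\{m_e:e\in E(F)\}$, where $m_e$ is the new degree-two vertex created when the edge $e=ij$ is subdivided, so that the two edges of $F'$ coming from $e$ are $im_e$ and $m_ej$. With this, \eqref{EQ:T-DEF} gives
\[
t(F',W)=\int_{[0,1]^{V(F')}}\prod_{e=ij\in E(F)} W(x_i,x_{m_e})\,W(x_{m_e},x_j)\,dx.
\]

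Next, since $W\in\WW$ is bounded and measurable the integrand is integrable, so Fubini's theorem lets me integrate out the midpoint variables $\{x_{m_e}:e\in E(F)\}$ first. The crucial structural observation is that each subdivision vertex $m_e$ is incident in $F'$ to exactly the two edges $im_e$ and $m_ej$; hence the variable $x_{m_e}$ occurs in precisely the two factors $W(x_i,x_{m_e})\,W(x_{m_e},x_j)$ and in no others. Consequently the integral over $x_{m_e}$ splits off as
\[
\int_0^1 W(x_i,z)\,W(z,x_j)\,dz=(W\circ W)(x_i,x_j),
\]
which is exactly the composition defined above. This works verbatim for a multigraph $F$, since each of a set of parallel edges is subdivided by its own distinct midpoint.

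Finally, carrying out all $|E(F)|$ such single-variable integrations leaves an integral over $[0,1]^{V(F)}$ of $\prod_{e=ij\in E(F)}(W\circ W)(x_i,x_j)$, which is precisely $t(F,W\circ W)$ by \eqref{EQ:T-DEF} applied to the kernel $W\circ W$. There is no genuine obstacle in this argument; the only point requiring (mild) care is the combinatorial bookkeeping showing that the midpoint variables are pairwise disjoint and that each appears in exactly one factor of the product, so that Fubini cleanly decouples the $|E(F)|$ single-variable integrals from the remaining integration over the original vertices of $F$.
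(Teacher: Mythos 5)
Your proof is correct. The paper states Lemma \ref{subdiv} without any proof at all, treating it as an evident fact; your computation---expanding $t(F',W)$ from \eqref{EQ:T-DEF}, noting that each subdivision vertex $m_e$ occurs in exactly the two factors $W(x_i,x_{m_e})W(x_{m_e},x_j)$, and integrating it out via Fubini to produce one factor of $(W\circ W)(x_i,x_j)$ per edge (with parallel edges handled by their distinct midpoints)---is precisely the routine verification the paper leaves implicit.
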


\section{Results and proofs}

\subsection{Moments and moment-like graph parameters}

Let $\WW(d)$ denote the set of 2-variable measurable functions
$W:~[0,1]^2\to [-d,d]$ that are symmetric in the sense that
$W(x,y)=W(y,x)$ for all $x,y\in [0,1]$. We denote by $\WW$ the union
of the sets $\WW(d)$ over all real numbers $d$.

Let us define four sets of graph parameters:
\begin{align*}
\TT_0(d)&=\{t(.,H):~H\text{ is a $[-d,d]$-weighted graph}\},\\
\TT_1(d)&=\{t(.,H):~H\text{ is a randomly $[-d,d]$-weighted graph}\},\\
\TT_2(d)&=\{t(.,W):~W\in\WW(d)\},\\
\TT_3(d)&=\{t(.,W):~W\text{ is a $[-d,d]$-graphon}\}.
\end{align*}
Clearly $\TT_0(d)\subseteq \TT_1(d),\TT_2(d)\subseteq \TT_3(d)$. We
prove that equality almost holds here. Let us quote Theorem 2.6 from
\cite{LSz6}, applied to our case:

\begin{theorem}\label{convgen-m}
Let $W_1,W_2,\dots$ be a sequence of $[-d,d]$-graphons such that
$(t(F,W_1),t(F,W_2),\dots)$ is a convergent sequence for every
multigraph $F$. Then there is a $[-d,d]$-graphon $W$ such that
$t(F,W_n)\to t(F,W)$ for every $F$.
\end{theorem}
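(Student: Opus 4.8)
The plan is to encode each graphon $W_m$ by the finite-dimensional distributions of its associated edge-weight array and then pass to the limit by the method of moments. Fix $n$ and recall $\PP_n\cong\R[\{z_{i,j}:1\le i<j\le n\}]$. For each $m$, the graph parameter $t(\cdot,W_m)$, extended linearly, restricts to a functional $\alpha_m^{(n)}$ on this polynomial ring which is normalized (the edgeless monomial has moment $1$), weakly reflection positive (hence positive semidefinite as a functional on $\R[\{z_{i,j}\}]$, since $t_W$ is weakly reflection positive), and satisfies $|\alpha_m^{(n)}(z_{i,j}^r)|=|t(K_2^r,W_m)|\le d^r$. These are exactly the hypotheses of Theorem \ref{semfuncpol}, with the $\binom{n}{2}$ variables $z_{i,j}$ playing the role of $x_1,\dots,x_n$. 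Thus $\alpha_m^{(n)}$ is represented by a unique probability measure $\mu_m^{(n)}$ on $[-d,d]^{\binom{n}{2}}$, the joint law of the edge weights $(Z_{i,j})_{1\le i<j\le n}$.

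First I would pass to the limit coordinatewise in $n$. By hypothesis $t(F,W_m)$ converges as $m\to\infty$ for every multigraph $F$; since monomials in the $z_{i,j}$ correspond to multigraphs on $[n]$, this says that $\int p\,d\mu_m^{(n)}$ converges for every monomial $p$, and Lemma \ref{konvmet} then produces a weak limit $\mu^{(n)}$ of the $\mu_m^{(n)}$ on $[-d,d]^{\binom{n}{2}}$. The family $\{\mu^{(n)}\}_n$ is consistent (deleting a node marginalizes $\mu^{(n+1)}$ to $\mu^{(n)}$, since an isolated node contributes a factor $1$ to every moment, and a measure on a compact box is determined by its moments), invariant under relabeling of $[n]$ (isomorphic multigraphs share the same $t$-value), and dissociated: as each $t(\cdot,W_m)$ is multiplicative, $t(F_1\cup F_2,W_m)=t(F_1,W_m)t(F_2,W_m)$, and passing to the limit shows that $\mu^{(n)}$ factorizes over disjoint blocks of coordinates. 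Kolmogorov's extension theorem then yields a single symmetric $[-d,d]$-valued array $(Z_{i,j})_{i<j}$ indexed by pairs from $\N$ that is exchangeable and dissociated, with finite-dimensional laws $\mu^{(n)}$.

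Finally I would realize this limit array by a graphon. An exchangeable dissociated symmetric array corresponds, by the Aldous--Hoover representation (equivalently, the characterization of homogeneous infinite random weighted graph models discussed in the introduction), to a map $W:[0,1]^2\to\PP[-d,d]$: conditionally on latent i.i.d.\ uniform labels $\xi_i$, the weights are independent with $Z_{i,j}\sim W(\xi_i,\xi_j)$, and the values remain in $[-d,d]$ because each $\mu^{(n)}$ is supported on $[-d,d]^{\binom{n}{2}}$. For this $[-d,d]$-graphon one computes, by conditioning on the $\xi_i$ and noting that a multiplicity-$F_{i,j}$ bundle between a pair is a single variable raised to the power $F_{i,j}$, that $t(F,W)=\E\bigl(\prod_{i<j}Z_{i,j}^{F_{i,j}}\bigr)=\lim_m t(F,W_m)$ for every $F$, which is the assertion. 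The main obstacle is precisely this representation step: extracting an honest $\PP[-d,d]$-valued graphon from the abstract exchangeable array, rather than merely a consistent family of moments, is where the real content lies, and it is here that one must invoke the Aldous--Hoover-type theorem and verify that the moment bounds force the representing kernel to be genuinely $[-d,d]$-valued.
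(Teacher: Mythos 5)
Your proposal is essentially correct, but note that there is nothing in this paper to compare it against: Theorem \ref{convgen-m} is quoted without proof from \cite{LSz6} (Theorem 2.6 there) and used as a black box, supplying the closedness of $\TT_3(d)$ that combines with Proposition \ref{PROP:SUFF} to give Theorems \ref{closure} and \ref{chr-t2}. Judged on its own merits, the first half of your argument --- representing each $t(\cdot,W_m)$ restricted to $\PP_n$ by a probability measure $\mu_m^{(n)}$ on $[-d,d]^{\binom{n}{2}}$ via Theorem \ref{semfuncpol} (justified by Proposition \ref{PROP:NEC}), then passing to a weak limit in $m$ via Lemma \ref{konvmet} --- is exactly the device the paper itself uses in proving Proposition \ref{PROP:SUFF}, where those measures become the random weighted graphs $Z_n$; and your verifications of consistency, exchangeability and dissociation are sound, since a measure on a compact box is determined by its moments. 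The genuinely different ingredient is the final step: you package the limiting moment data into an infinite exchangeable dissociated array by Kolmogorov extension and then invoke the Aldous--Hoover representation to extract the $\PP[-d,d]$-valued kernel. This is a legitimate alternative route (in the style of Diaconis--Janson), and what it buys is generality --- the argument works verbatim for edge decorations from any compact metric space --- at the price of importing a deep external theorem. Two cautions, though. First, you must cite the genuine Aldous--Hoover theorem for dissociated symmetric arrays (representation $Z_{i,j}=g(\xi_i,\xi_j,\zeta_{i,j})$ with no global randomness variable, as in Aldous, Hoover, or Kallenberg); the parenthetical alternative you offer, the ``characterization of homogeneous infinite random graph models discussed in the introduction,'' is itself a result of \cite{LSz6} --- the very source of the theorem you are reproving --- so leaning on it would be circular. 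Second, the passage from $g$ to the graphon, taking $W(x,y)$ to be the law of $g(x,y,\zeta)$ for $\zeta$ uniform, still requires the (easy, Fubini-type) check that $W$ is symmetric and measurable as a map into $\PP[-d,d]$ with the weak topology; that measurability is precisely what makes $W$ a $[-d,d]$-graphon in the paper's sense, so it should be stated rather than left implicit.
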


This theorem implies that $\TT_3(d)$ is a closed subset of the space
of graph parameters under pointwise convergence. We are going to
prove:

\begin{theorem}\label{closure}
The set $\TT_3(d)$ is the closure of $\TT_0(d)$.
\end{theorem}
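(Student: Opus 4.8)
The plan is to prove the two inclusions $\overline{\TT_0(d)}\subseteq\TT_3(d)$ and $\TT_3(d)\subseteq\overline{\TT_0(d)}$ separately. The first is immediate: every $[-d,d]$-weighted graph is a (point-mass valued) $[-d,d]$-graphon, so $\TT_0(d)\subseteq\TT_3(d)$, and Theorem \ref{convgen-m} says precisely that $\TT_3(d)$ is closed under pointwise convergence of graph parameters; hence $\overline{\TT_0(d)}\subseteq\TT_3(d)$. For the reverse inclusion, since the multigraphs form a countable set, pointwise convergence is metrizable, and it is enough to show that for every $[-d,d]$-graphon $W$, every finite family $F_1,\dots,F_m$ of multigraphs, and every $\eps>0$ there is a $[-d,d]$-weighted graph $H'$ with $|t(F_j,H')-t(F_j,W)|<\eps$ for all $j$. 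Write $U_k(x,y)=\int_{[-d,d]}z^k\,dW(x,y)(z)\in\WW(d^k)$ for the $k$-th moment function of $W$, and let $N$ be the largest edge multiplicity occurring in $F_1,\dots,F_m$; only $U_1,\dots,U_N$ enter the values $t(F_j,W)$.

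First I would discretize the base space $[0,1]$. Partitioning $[0,1]$ into $n$ dyadic intervals $S_1,\dots,S_n$ and replacing $W$ on each block $S_i\times S_j$ by the averaged probability measure $\mu^{(n)}_{ij}=|S_i|^{-1}|S_j|^{-1}\int_{S_i\times S_j}W$, I obtain a step-valued $[-d,d]$-graphon $W^{(n)}$ whose $k$-th moment function is the block-average (conditional expectation) of $U_k$. By martingale (Lebesgue differentiation) convergence these block-averages tend to $U_k$ in $L^1$ for each $k\le N$, and as the $U_k$ are bounded, dominated convergence gives $t(F_j,W^{(n)})\to t(F_j,W)$; fix $n$ so that the error is below $\eps/2$. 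Optionally, using the truncated moment problem on the compact interval $[-d,d]$ one may replace each $\mu^{(n)}_{ij}$ by a finitely supported measure with the same moments up to order $N$, turning $W^{(n)}$ into a genuine randomly weighted graph (an element of $\TT_1(d)$) without changing any $t(F_j,\cdot)$; this is convenient but not logically needed.

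The crux is the final de-randomization: approximating the distribution-valued $W^{(n)}$ by an honest $[-d,d]$-weighted graph via a sampling construction. I would blow up each of the $n$ blocks into $K$ copies, giving $nK$ nodes each of weight $1/(nK)$, and to the edge between copies $(i,a)$ and $(j,b)$ assign an independent random weight drawn from $\mu^{(n)}_{ij}$, respecting the symmetry $\beta_{(i,a),(j,b)}=\beta_{(j,b),(i,a)}$. All weights lie in $[-d,d]$, so the resulting (random) weighted graph $H'$ belongs to $\TT_0(d)$. Expanding $t(F_j,H')$ as an average over maps $\psi\colon V(F_j)\to[n]\times[K]$ and taking expectation over the samples, the dominant contribution comes from maps injective on copies: there, distinct edges of $F_j$ meet distinct independent weights, while parallel edges of $F_j$ must reuse the same weight and contribute the corresponding power, exactly the bookkeeping built into \eqref{homtorwg}. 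Hence $\E\,t(F_j,H')\to t(F_j,W^{(n)})$, the non-injective maps contributing only $O(1/K)$. A routine second-moment estimate (only finitely many vertices and edges are involved) shows $\Var\,t(F_j,H')\to 0$, so by Chebyshev some realization satisfies $|t(F_j,H')-t(F_j,W^{(n)})|<\eps/2$ for all $j$ simultaneously. Combining with the discretization bound yields $|t(F_j,H')-t(F_j,W)|<\eps$, as required.

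The step I expect to be the main obstacle is precisely this last one. The delicate point is that matching $\E\,t(F_j,H')$ to $t(F_j,W^{(n)})$ forces a careful distinction between parallel edges of the multigraph $F_j$, which share a single sampled weight and therefore produce $\E(B^{\,\mathrm{mult}})$ rather than $(\E B)^{\mathrm{mult}}$, and distinct edges of $F_j$ that land by accident on the same block-pair, which see independent samples; the variance bound must then absorb all the lower-order correlations uniformly in $K$. Everything preceding it is standard measure theory together with the elementary fact that a truncated moment sequence of a measure on $[-d,d]$ is realized by finitely many atoms in $[-d,d]$.
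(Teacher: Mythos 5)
Your proof is correct, and for the substantive inclusion $\TT_3(d)\subseteq\overline{\TT_0(d)}$ it takes a genuinely different route from the paper (the easy inclusion, via Theorem \ref{convgen-m} and $\TT_0(d)\subseteq\TT_3(d)$, is the same in both). The paper never approximates the graphon directly: it factors Theorem \ref{closure} through Propositions \ref{PROP:NEC} and \ref{PROP:SUFF}, first showing that every parameter in $\TT_3(d)$ is normalized, multiplicative, weakly reflection positive and satisfies $|f(K_2^k)|\le d^k$, and then --- forgetting the graphon entirely --- showing that \emph{any} abstract parameter with these properties lies in the closure of $\TT_0(d)$. That second step needs the semigroup moment machinery: Theorem \ref{semfuncpol} is invoked to produce, for each $n$, a probability measure $\mu_n$ on $[-1,1]^{\binom{n}{2}}$ realizing $f$ on $\FF_n$; a random weighted graph $Z_n$ with node weights $1/n$ and edge weights distributed according to $\mu_n$ is then sampled, and concentration ($\Var(t_\inj(F,Z_n))=O(1/n)$ via multiplicativity, Lemma \ref{tavolsag}, Chebyshev and Borel--Cantelli) yields a sequence in $\TT_0(d)$ converging to $f$. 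You instead exploit the graphon structure: dyadic block-averaging plus martingale/dominated convergence reduces to a step-valued graphon, and blow-up-and-sample with a second-moment bound de-randomizes it; weak reflection positivity and the moment theorem are never used. Note that your variance step and the paper's are the same trick in different clothes --- both rest on $t(F\sqcup F,\cdot)=t(F,\cdot)^2$ together with an $O(1/K)$ (respectively $O(1/n)$, via $t$ versus $t_\inj$) error term --- and that your handling of parallel edges under injective maps is exactly the bookkeeping the paper encodes in $\mu_n$. What each approach buys: yours is more elementary and self-contained if Theorem \ref{closure} is the only goal; the paper's detour through the axioms is precisely what makes Theorem \ref{chr-t2} (the axiomatic characterization of $\TT_3(d)$) drop out of the same two propositions, which your argument cannot deliver since it presupposes that $f$ is given by a graphon. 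One cosmetic point: you should also fix bounded loop weights on the blown-up nodes of $H'$, but since non-injective maps contribute only $O(1/K)$ this is immaterial.
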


We are also going to prove

\begin{theorem}\label{chr-t2}
A graph parameter $f$ belongs to $\TT_3(d)$ if and only if it is
normalized, multiplicative, weakly reflection positive and satisfies
$|f(K_2^k)|\leq d^k$.
\end{theorem}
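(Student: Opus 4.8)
The necessity of the four conditions is essentially Proposition \ref{PROP:NEC}, and I would first observe that its proof extends verbatim from functions in $\WW(d)$ to $[-d,d]$-graphons. Writing $t(F,W)$ as the expectation $\E(\prod_{ij\in E(F)}Z_{ij}^{F_{ij}})$ of a product of the jointly distributed edge-weights $Z_{ij}\in[-d,d]$ drawn from $W$ (pick the points $x_i$, then the weights conditionally), normalization is $t(K_1,W)=1$; multiplicativity holds because disjoint node sets use independent randomness, so the integral factors; weak reflection positivity reads $t_W(p^2)=\E(p(Z)^2)\ge 0$; and $|t(K_2^k,W)|=|\E(Z_{12}^k)|\le d^k$ since the distributions are supported on $[-d,d]$. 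So only sufficiency needs real work.

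For sufficiency, fix $f$ with the four properties. The plan is to reconstruct the random edge-weighting one finite node set at a time. For each $n$, identify $\PP_n$ with the polynomial ring $\R[\{z_{ij}:1\le i<j\le n\}]$ in $\binom n2$ variables and regard $f|_{\PP_n}$ as a linear functional $\alpha$. Normalization gives $\alpha(1)=f(K_1)^n=1$ and weak reflection positivity gives $\alpha(p^2)\ge 0$, so the only hypothesis of Theorem \ref{semfuncpol} left to check is the per-variable bound: as an unlabeled multigraph the monomial $z_{ij}^r$ is $K_2^r$ together with $n-2$ isolated nodes, whence by multiplicativity and $f(K_1)=1$ we get $|\alpha(z_{ij}^r)|=|f(K_2^r)|\le d^r$. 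Theorem \ref{semfuncpol} then yields a unique probability measure $\mu_n$ on $[-d,d]^{\binom n2}$ representing $f$ on $\PP_n$; I read $\mu_n$ as the law of a random symmetric $[-d,d]$-weighting $(Z_{ij})$ of the complete graph on $[n]$.

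Next I would glue these into one infinite object. Adjoining an isolated node changes no value of $f$ (multiplicativity with $f(K_1)=1$), so $f|_{\PP_n}$ is the restriction of $f|_{\PP_{n+1}}$ along $\PP_n\hookrightarrow\PP_{n+1}$; hence the marginal of $\mu_{n+1}$ on the coordinates $z_{ij}$ with $i<j\le n$ again represents $f$ on $\PP_n$ and, by the uniqueness in Theorem \ref{semfuncpol}, equals $\mu_n$. By Kolmogorov extension the $\mu_n$ assemble into a single measure $\mu$ on $[-d,d]$-weightings of the complete graph on $\N$, and isomorphism-invariance of $f$ makes each $\mu_n$, hence $\mu$, invariant under node permutations, i.e.\ exchangeable. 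Finally multiplicativity upgrades to independence of disjoint node sets: for multigraphs $F_1,F_2$ on disjoint vertex sets, $f(F_1\cup F_2)=f(F_1)f(F_2)$ says the monomial in the edge-variables of $F_1$ is uncorrelated under $\mu$ with the one of $F_2$; ranging over all such $F_1,F_2$ and using linearity, all mixed moments factor, and since the weights lie in the compact interval $[-d,d]$ (where distributions are moment-determined) this forces the restrictions of $\mu$ to disjoint node sets to be independent. Thus $\mu$ is a dissociated exchangeable random $[-d,d]$-weighted graph.

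Such a measure is precisely the ``third representation'' of a $[-d,d]$-graphon recalled in Section \ref{MOMMULTI} (the Aldous--Hoover type representation theorem of \cite{LSz6}): a dissociated exchangeable array has degenerate directing variable, so it is given by $Z_{ij}=g(U_i,U_j,U_{\{i,j\}})$ with i.i.d.\ uniform $U$'s, which is the same datum as a single $[-d,d]$-graphon $W$ whose value $W(u,v)$ is the law of $g(u,v,\cdot)$. Matching moments gives $t(F,W)=\E_\mu(\prod_{ij\in E(F)}Z_{ij}^{F_{ij}})=f(F)$ for every multigraph $F$, so $f=t(.,W)\in\TT_3(d)$. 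I expect the main obstacle to be exactly this final ergodicity step: one must see that multiplicativity is not merely implied by, but is the precise combinatorial shadow of, dissociation, so that the directing measure is trivial and we obtain one honest graphon rather than a random mixture of graphons. Granting the \cite{LSz6} representation, this reduces to the independence argument above; without it one would have to reprove from scratch that dissociated exchangeable $[-d,d]$-arrays are coded by graphons.
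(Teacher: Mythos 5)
Your proof is correct, but after its first step it follows a genuinely different route from the paper's. Both arguments begin identically: necessity is Proposition \ref{PROP:NEC}, and for sufficiency one applies Theorem \ref{semfuncpol} to $f|_{\PP_n}$ (with exactly your verification of the hypotheses, using multiplicativity and $f(K_1)=1$ to get $\alpha(1)=1$ and $|\alpha(z_{ij}^r)|=|f(K_2^r)|\le d^r$) to obtain the unique representing measures $\mu_n$ on $[-d,d]^{\binom{n}{2}}$. From there the paper stays finite and probabilistic: it regards $\mu_n$ as the law of a random weighted graph $Z_n$ on $[n]$ with nodeweights $1/n$, uses isomorphism invariance to write $f(F)=\E(t_\inj(F,Z_n))$, combines multiplicativity ($f(F^2)=f(F)^2$) with Lemma \ref{tavolsag} to bound $\Var(t_\inj(F,Z_n))\le 6m^2/n$, and concludes by Chebyshev and Borel--Cantelli that $t(\cdot,Z_{n^2})\to f$ pointwise almost surely; this proves Proposition \ref{PROP:SUFF} (that $f$ lies in the closure of $\TT_0(d)$), and the quoted Theorem \ref{convgen-m} from \cite{LSz6} (closedness of $\TT_3(d)$ under pointwise limits) then finishes. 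You instead go infinite and structural: uniqueness in Theorem \ref{semfuncpol} gives consistency and permutation-invariance of the $\mu_n$, Kolmogorov extension produces an exchangeable array, multiplicativity plus moment-determinacy of compactly supported distributions gives dissociation, and the ``third representation'' of \cite{LSz6} recalled in Section \ref{MOMMULTI} (an Aldous--Hoover type correspondence) converts this into a $[-d,d]$-graphon with matching moments. Both routes outsource comparable depth to \cite{LSz6}, but to different results: the paper leans on the formally stated Theorem \ref{convgen-m}, while you lean on the exchangeable-array correspondence, which this paper recalls only in prose --- your closing caveat is exactly the right one, since without it you would have to reprove that dissociated exchangeable bounded arrays are coded by graphons. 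What the paper's concentration argument buys in exchange is the stronger conclusion of Theorem \ref{closure}, namely that $\TT_3(d)$ is the closure of $\TT_0(d)$, i.e.\ approximability of $f$ by finite weighted graphs, which your route does not yield; what your route buys is a cleaner conceptual dictionary matching the four axioms (normalization, weak reflection positivity, growth, multiplicativity, invariance) to the defining features of the limit object (probability measure, boundedness, dissociation, exchangeability).
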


By Tychonov's Compactness Theorem, $\TT_3(d)$ is compact as a closed
subspace of the compact space $\prod_F [-d^{|E(F)|},d^{|E(F)|}]$.

Both theorems will follow if we prove two facts:

\begin{prop}\label{PROP:NEC}
Every graph parameter $f\in\TT_3(d)$ is normalized, multiplicative,
reflection positive and satisfies $|f(K_2^k)|\leq d^k$.
\end{prop}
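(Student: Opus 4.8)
The plan is to argue directly from the definition of the $F$-moment of a $[-d,d]$-graphon $W$. Write $\mu_{x,y}=W(x,y)\in\PP[-d,d]$ for the distribution attached to a pair $(x,y)$, and let $W_m(x,y)=\int_{[-d,d]}s^m\,d\mu_{x,y}(s)$ be its $m$-th moment, so that $(W_0,W_1,\dots)$ is the associated moment function sequence; for a multigraph $F$ with edge multiplicities $F_{ij}$ the moment is
\[
t(F,W)=\int_{[0,1]^{V(F)}}\prod_{\{i,j\}:\,F_{ij}>0}W_{F_{ij}}(x_i,x_j)\,dx .
\]
Three of the four properties are the easy part. The normalization $f(K_1)=1$ is immediate because $K_1$ carries no edges. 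The bound $|f(K_2^k)|=\bigl|\int_{[0,1]^2}W_k\bigr|\le d^k$ follows from $|W_k(x,y)|=\bigl|\int s^k\,d\mu_{x,y}\bigr|\le d^k$, since $\mu_{x,y}$ is supported on $[-d,d]$. Multiplicativity $f(F_1\cup F_2)=f(F_1)f(F_2)$ is a Fubini argument: the node variables of $F_1$ and $F_2$ are disjoint and the edge factors of one family involve none of the variables of the other, so the integral splits.

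The substantial point is reflection positivity, and here I would build a probability space whose $L_2$-inner product reproduces $t(F_1F_2,W)$. Fix $n$ and take independent uniform variables $Y_1,\dots,Y_n\in[0,1]$ (one per labeled node) together with variables $Z_{ij}$ $(1\le i<j\le n)$ that, conditionally on $Y$, are drawn independently with $Z_{ij}\sim\mu_{Y_i,Y_j}$; let $\mathbb{P}$ be the resulting measure on $\Omega=(Y,Z)$. To each $F\in\GG_n$ I associate the bounded measurable function
\[
\Phi_F(Y,Z)=\Bigl(\prod_{i<j}Z_{ij}^{F_{ij}}\Bigr)\,\psi_F(Y)\ \in\ L_2(\Omega,\mathbb{P}),
\]
where $\psi_F(Y)$ is the expected product of those edge factors of $F$ that touch at least one unlabeled node, obtained by integrating over the (independent) unlabeled node values and their edge draws with the labeled values fixed at $Y$. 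Extending $\Phi$ linearly to $\QQ_n$, the claim reduces to the identity $t(F_1F_2,W)=\langle\Phi_{F_1},\Phi_{F_2}\rangle$, because then for $p=\sum_\alpha c_\alpha F_\alpha$ one gets
\[
f(p^2)=\sum_{\alpha,\beta}c_\alpha c_\beta\langle\Phi_{F_\alpha},\Phi_{F_\beta}\rangle=\Bigl\|\sum_\alpha c_\alpha\Phi_{F_\alpha}\Bigr\|_2^2\ge 0 .
\]

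To verify the identity I would unfold $t(F_1F_2,W)$. In the product $F_1F_2$ the two factors are glued along the labeled nodes, so the edge between labeled nodes $i,j$ acquires multiplicity $(F_1)_{ij}+(F_2)_{ij}$ and contributes $W_{(F_1)_{ij}+(F_2)_{ij}}(Y_i,Y_j)=\E_{Z_{ij}}\bigl[Z_{ij}^{(F_1)_{ij}}Z_{ij}^{(F_2)_{ij}}\bigr]$, a factor governed by a single shared draw $Z_{ij}$. The edges touching an unlabeled node of one factor involve none of the $Z_{ij}$ and none of the other factor's nodes, so conditionally on $Y$ they are independent of everything on the other side and their integrals separate into $\psi_{F_1}(Y)\psi_{F_2}(Y)$. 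Collecting terms and using the conditional independence of the $Z_{ij}$ given $Y$ gives $t(F_1F_2,W)=\E[\Phi_{F_1}\Phi_{F_2}]$, as required.

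The step I expect to demand the most care is precisely this factorization: one must track the multigraph product faithfully, the key feature being that parallel edges between a pair of labeled nodes add their multiplicities and are driven by \emph{one and the same} random value $Z_{ij}$ rather than by independent copies. This is exactly what distinguishes multigraph moments (full reflection positivity) from simple-graph moments, and it is why $\psi_F$ must be separated off from the labeled--labeled part and shown to depend on $Y$ alone. Once this bookkeeping is correct the positivity is automatic from the $L_2$ structure, and boundedness of all functions involved (everything lies in a fixed interval $[-d^{N},d^{N}]$) removes any integrability concern.
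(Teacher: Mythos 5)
Your proof is correct, and it certifies positivity by a genuinely different mechanism than the paper, although both arguments pivot on the same structural identity: after fixing the labeled node values $x=(x_1,\dots,x_n)$, the moment of a product splits as
\[
t(F_1F_2,W)=\int_{[0,1]^n}t_x(F_1,W)\,t_x(F_2,W)\prod_{1\le i<j\le n}W_{(F_1)_{ij}+(F_2)_{ij}}(x_i,x_j)\,dx,
\]
where your $\psi_F(Y)$ is exactly the paper's partial integral $t_x(F,W)$, and the added multiplicities reflect the single shared edge between labeled nodes --- the point you rightly flag as the crux. The paper then reads this identity as $M(f,n)=\int M(x)\,dx$ and proves each $M(x)$ positive semidefinite by linear algebra: $M(x)$ is a row-and-column scaling of the Schur product of the matrices $M(x,i,j)_{F,F'}=W_{F_{ij}+F'_{ij}}(x_i,x_j)$, each of which is (up to repetition) the moment matrix of the random variable $W(x_i,x_j)$ and hence positive semidefinite. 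You instead build one probability space --- uniform $Y_i$ plus conditionally independent draws $Z_{ij}\sim\mu_{Y_i,Y_j}$ --- and exhibit $M(f,n)$ as the Gram matrix of the vectors $\Phi_F\in L_2$, so that $f(p^2)=\|\Phi_p\|_2^2\ge 0$ with no further input. In effect you have inlined the two linear-algebra facts the paper invokes: conditional independence of the $Z_{ij}$ plays the role of the Schur product theorem, and realizing $W_m(x,y)$ as honest moments of the draw $Z_{ij}$ replaces the positive semidefiniteness of moment matrices. The paper's route is shorter given those standard facts; yours is more self-contained, makes the graphon's probabilistic content explicit (it uses in an essential way that $W(x,y)$ is a genuine probability measure, not merely a positive semidefinite moment sequence --- though for a $[-d,d]$-graphon these coincide), and constructs the representing Hilbert space directly, which is the natural object behind reflection positivity.
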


\begin{prop}\label{PROP:SUFF}
Every normalized, multiplicative, weakly reflection positive graph
parameter satisfying $|f(K_2^k)|\leq d^k$ is the limit of graph
parameters in $\TT_0$.
\end{prop}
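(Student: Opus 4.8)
The plan is to read off, from weak reflection positivity and the growth bound, a probability measure on edge‑weightings of a large complete graph, and then to manufacture the approximating weighted graphs in $\TT_0(d)$ by \emph{sampling} a single such weighting. Fix a large $N$ and apply Theorem \ref{semfuncpol} to the restriction of $f$ to $\PP_N\cong\R[z_{ij}:1\le i<j\le N]$. Weak reflection positivity says precisely that this functional is positive semidefinite, normalization gives the value $1$ at $1$, and $|f(K_2^k)|\le d^k$ gives $|f(z_{ij}^r)|\le d^r$ for each edge variable (multiplicativity lets one discard the isolated nodes). Hence there is a unique probability measure $\mu_N$ on $[-d,d]^{\binom N2}$ whose moments are the values of $f$: for every multigraph $F$ on a subset of $\{1,\dots,N\}$, with multiplicities $F_{ij}$, one has $\int\prod_{ij}z_{ij}^{F_{ij}}\,d\mu_N=f(F)$, the labeling and the isolated nodes being irrelevant because $f$ is isomorphism‑invariant, multiplicative and normalized. (By uniqueness, $\mu_N$ is also $S_N$‑invariant.)

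The approximants come from sampling. Draw one weighting $\beta\sim\mu_N$ and let $H_N(\beta)$ be the weighted graph on node set $\{1,\dots,N\}$ with uniform node weights $1/N$ and edge weights $\beta_{ij}\in[-d,d]$; thus $t(.,H_N(\beta))\in\TT_0(d)$. Since the node weights sum to $1$, for a multigraph $F$ with $k=|V(F)|$ we have $t(F,H_N)=N^{-k}\sum_{\varphi:V(F)\to[N]}\prod_{ij\in E(F)}\beta_{\varphi(i)\varphi(j)}^{F_{ij}}$. The non‑injective terms number $O(k^2N^{k-1})$ and each is bounded by $d^{|E(F)|}$, so $t(F,H_N)=\widehat t(F,\beta)+O\!\big(k^2 d^{|E(F)|}/N\big)$, where $\widehat t(F,\beta)$ is the average of $\prod_{ij}\beta_{\varphi(i)\varphi(j)}^{F_{ij}}$ over the \emph{injective} $\varphi$.

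The heart of the argument is a first‑ and second‑moment computation for $\widehat t(F,\beta)$ over $\beta\sim\mu_N$. For any injective $\varphi$ the corresponding monomial represents a copy of $F$ placed on distinct nodes, so by the defining moment property of $\mu_N$ and isomorphism‑invariance of $f$ it integrates to $f(F)$; hence $\widehat t(F,\cdot)$ is \emph{unbiased}, $\E_\beta[\widehat t(F,\beta)]=f(F)$. For the variance, expand $\widehat t(F,\beta)^2$ as an average over ordered pairs $(\varphi,\varphi')$ of injective maps. The pairs with disjoint images (a fraction $\to 1$ as $N\to\infty$) give, via the \emph{multiplicativity} of $f$, the value $f(F\sqcup F)=f(F)^2$; the remaining pairs, whose images overlap in at least one node, form a fraction $O(k^2/N)$ and are uniformly bounded by $d^{2|E(F)|}$. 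Therefore $\E_\beta[\widehat t(F,\beta)^2]\to f(F)^2$ and $\Var_\beta[\widehat t(F,\beta)]\to 0$. Given any finite family of multigraphs and any $\eps>0$, taking $N$ large and combining Chebyshev's inequality with a union bound shows that a positive‑probability — hence nonempty — set of weightings $\beta$ satisfies $|t(F,H_N(\beta))-f(F)|<\eps$ simultaneously for all $F$ in the family. Any such $\beta$ exhibits an element of $\TT_0(d)$ close to $f$, which proves $f\in\overline{\TT_0(d)}$.

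I expect the main obstacle to be exactly this variance estimate: one must verify that, after passing to injective homomorphisms, the overlapping‑image terms are genuinely negligible. This is where the exponential bound $|\beta_{ij}|\le d$ (coming from $|f(K_2^k)|\le d^k$ through Theorem \ref{semfuncpol}) is needed to keep every relevant moment finite and in range, and where multiplicativity is indispensable for identifying the leading term as $f(F)^2$. The bookkeeping is somewhat more delicate for multigraphs with large edge multiplicities than in the simple‑graph case, but the uniform bound on $|\beta_{ij}|$ controls each factor at once.
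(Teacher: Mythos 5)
Your proposal is correct and follows essentially the same route as the paper: both extract the measure $\mu_N$ on $[-d,d]^{\binom{N}{2}}$ from Theorem \ref{semfuncpol}, sample a random weighted graph with node weights $1/N$, show the injective density is an unbiased estimator of $f(F)$, and kill the variance via multiplicativity ($f(F\sqcup F)=f(F)^2$) plus the $O(k^2/N)$ bound on overlapping placements, finishing with Chebyshev. The only differences are cosmetic: the paper routes the variance bound through Lemma \ref{tavolsag} and concludes via Borel--Cantelli along the subsequence $n^2$ (giving a single a.s.\ convergent sequence), whereas you expand the second moment directly over pairs of injective maps and extract one good weighting per finite family and $\eps$, which equally certifies membership in the closure.
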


Before proving these propositions, we state an easy lemma about
homomorphism densities and injective homomorphism densities, which
follows from Lemma 2.1 in \cite{LSz1} by scaling the edgeweights.

\begin{lemma}\label{tavolsag}
Let $H$ be a weighted graph with $n$ nodes such that all the
nodeweights are $1$ and the edgeweights are in $[-d,d]$. Then for an
arbitrary multigraph $F$ with $m$ nodes,
\[
|t(F,H)-t_\inj(F,H)|\leq 2{{m}\choose{2}}\frac{1}{n}d^{|E(F)|}.
\]
\end{lemma}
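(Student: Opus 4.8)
The plan is to read both $t(F,H)$ and $t_\inj(F,H)$ as averages of the single quantity
\[
P(\varphi)=\prod_{(i,j)\in E(F)}\beta_{\varphi(i),\varphi(j)}
\]
over maps $\varphi:V(F)\to V(H)$, the only difference being which maps one averages over. Since all nodeweights equal $1$ and $|V(H)|=n$, we have $\sum_i\alpha_i=n$, so $t(F,H)=n^{-m}\sum_{\varphi}P(\varphi)$ is the average of $P$ over all $n^m$ maps, while $t_\inj(F,H)$ is the average of $P$ over the $(n)_m:=n(n-1)\cdots(n-m+1)$ injective maps (this is the count of injective maps $V(F)\hookrightarrow V(H)$). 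The whole estimate then reduces to the elementary fact that these two averages are close, because the injective maps are all but a $\binom m2/n$ fraction of all maps and every term $P(\varphi)$ is uniformly small.

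First I would record the pointwise bound $|P(\varphi)|\le d^{|E(F)|}$, immediate from $|\beta_{ij}|\le d$ and the fact that the product runs over the $|E(F)|$ edges of $F$ counted with multiplicity. Next I would split the all-maps sum as $\sum_{\varphi}P(\varphi)=\sum_{\varphi\text{ inj}}P(\varphi)+\sum_{\varphi\text{ non-inj}}P(\varphi)$ and subtract, obtaining
\[
t(F,H)-t_\inj(F,H)=\Bigl(\tfrac{1}{n^m}-\tfrac{1}{(n)_m}\Bigr)\sum_{\varphi\text{ inj}}P(\varphi)+\tfrac{1}{n^m}\sum_{\varphi\text{ non-inj}}P(\varphi).
\]
Using the pointwise bound, the first term is at most $\bigl(1-(n)_m/n^m\bigr)d^{|E(F)|}$ in absolute value (bounding each of the $(n)_m$ injective summands by $d^{|E(F)|}$ and simplifying, so the factor $(n)_m$ cancels), and the second is at most the same quantity (there are $n^m-(n)_m$ non-injective maps). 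Hence
\[
|t(F,H)-t_\inj(F,H)|\le 2\Bigl(1-\tfrac{(n)_m}{n^m}\Bigr)d^{|E(F)|}.
\]

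It remains to bound $1-(n)_m/n^m$, and here I would invoke the Weierstrass product inequality: since $(n)_m/n^m=\prod_{k=0}^{m-1}(1-k/n)$ with each factor in $[0,1]$, we get $\prod_{k=0}^{m-1}(1-k/n)\ge 1-\sum_{k=0}^{m-1}k/n=1-\binom m2/n$, so that $1-(n)_m/n^m\le\binom m2/n$. Substituting yields exactly $|t(F,H)-t_\inj(F,H)|\le 2\binom m2\frac1n d^{|E(F)|}$.

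The only genuinely delicate point is the bookkeeping in the middle display: the discrepancy comes from two sources, namely replacing the normalizing constant $(n)_m$ by $n^m$ on the injective part and the presence of the extra non-injective terms, and one must check that each contributes at most $\bigl(1-(n)_m/n^m\bigr)d^{|E(F)|}$, which is what produces the factor $2$. Everything else is routine. I note that this is exactly the content of Lemma 2.1 of \cite{LSz1} once the edgeweights are rescaled into $[-1,1]$ (dividing every $\beta_{ij}$ by $d$ multiplies both $t(F,H)$ and $t_\inj(F,H)$ by $d^{-|E(F)|}$), which is the route indicated before the statement; I would use whichever of the two is more convenient.
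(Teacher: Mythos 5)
Your proof is correct. Note that the paper itself contains no argument for this lemma: it simply remarks that the statement ``follows from Lemma 2.1 in \cite{LSz1} by scaling the edgeweights.'' Your counting argument is precisely the proof of that cited lemma, written out in full, with the scaling absorbed into the pointwise bound $|P(\varphi)|\le d^{|E(F)|}$ rather than performed as a separate reduction to $d=1$. The decomposition of the error into two pieces --- the renormalization error on the injective maps and the contribution of the non-injective maps, each bounded by $\bigl(1-(n)_m/n^m\bigr)d^{|E(F)|}$ --- followed by the Weierstrass inequality $\prod_{k=0}^{m-1}(1-k/n)\ge 1-\binom{m}{2}/n$ is exactly the standard route; what your write-up buys over the paper's citation is self-containedness.

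Two points deserve to be made explicit. First, you read $t_\inj(F,H)$ as the average of $P(\varphi)$ over the $(n)_m=n(n-1)\cdots(n-m+1)$ injective maps. The paper's literal definition divides $\inj(F,H)$ by $\sigma_m(\alpha)$, the $m$-th elementary symmetric polynomial of the nodeweights, which for unit weights equals $\binom{n}{m}=(n)_m/m!$. Under that literal reading the lemma would be false (take $F$ edgeless: $t(F,H)=1$ while $\inj(F,H)/\binom{n}{m}=m!$), so the intended normalization is the one you use, matching \cite{LSz1}; this is a looseness in the paper rather than a gap in your argument, but you should state which normalization you are taking. Second, the Weierstrass step requires all factors $1-k/n$, $0\le k\le m-1$, to be nonnegative, i.e.\ $m\le n$; this is implicit in the statement, since for $m>n$ there are no injective maps and $t_\inj(F,H)$ is a $0/0$ expression, but it merits a sentence.
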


\begin{proof*}{Proposition \ref{PROP:NEC}}
It is clear that for every $[-d,d]$-graphon $W$, the parameter
$f=t(.,W)$ is multiplicative and normalized, and satisfies
$|f(K_2^k)|\leq d^k$. We prove that $M(f,n)$ is positive
semidefinite.

Let $F$ be a graph in $\GG_n$ such that $V(F)=[m]$ for some natural
number $m\geq n$. For every choice of the variables $x_1,\dots,x_n$
we define
\[
t_{x_1,\dots,x_n}(F,W)=\int\limits_{[0,1]^{m-n}} \prod_{{1\leq i\leq
m,n+1\leq j\leq m}\atop{i < j}}W_{F_{i,j}}(x_i,x_j)\,dx_{n+1}\dots
dx_m.
\]
We have
\begin{align*}
t(F F',W)=&\int_{[0,1]^n}t_{x_1,\dots,x_n}(F,W)
t_{x_1,\dots,x_n}(F',W)\\
&\times\prod_{1\leq i<j \leq
n}W_{F_{i,j}+F'_{i,j}}(x_i,x_j)\,dx_1\dots dx_n.
\end{align*}
where $F$ and $F'$ are graphs in $\GG_n$.

For every $x\in [0,1]^n$, let $M(x)$ denote the $\GG_n\times\GG_n$
matrix in which
\[
M(x)_{F,F'}=t_x(F,W)t_x(F',W)\prod_{1\leq i<j\leq
n}W_{F_{i,j}+F'_{i,j}}(x_i,x_j).
\]
From the above formulas one obtains that
\begin{equation}\label{EQ:MNF}
M(f,n)=\int_{[0,1]^n} M(x)\,dx,
\end{equation}
so it suffices to prove that $M(x)$ is positive semidefinite for
every $x$.

For $1\le i<j\le n$, let $M(x,i,j)$ denote the $\GG_n\times\GG_n$
matrix in which
\[
M(x,i,j)_{F,F'}=W_{F_{i,j}+F'_{i,j}}(x_i,x_j).
\]
Since $M(x,i,j)$ is essentially (up to repetition of rows and
columns) the moment matrix of the random variable $W(x_i,x_j)$, it is
positive semidefinite. We get $M(x)$ from the Schur product of the
matrices $M(x,i,j)$ over all possible pairs $1\leq i<j\leq n$ by
scaling the rows and columns. This shows that $M(x)$ is indeed
positive semidefinite.
\end{proof*}

\begin{proof*}{Proposition \ref{PROP:SUFF}}
It suffices to consider the case $d=1$, since we can scale the
edgeweights by $1/d$. Let $f$ be a weakly reflection positive,
normalized multiplicative graph parameter with $f(K_2^k)\le 1$ for
all $k\ge 1$. We prove that there is a sequence of stepfunctions
$U_1,U_2,\dots$ in $\WW(1)$ such that $\lim_{n\to
\infty}t(F,U_n)=f(F)$ for every graph $F$.

The weak reflection positivity of $f$ means that $f$ is a
semidefinite function on the polynomial ring $\PP_n$ for every
natural number $n$. Using that $f(K_2^k)\leq 1$ and Theorem
\ref{semfuncpol} for $\PP_n$ we obtain that there is a unique
probability measure $\mu_n$ on $[-1,1]^{{n}\choose{2}}$ such that
\[
f(F)=\E\Bigl(\prod_{1\leq i<j\leq n}z_{i,j}^{F_{i,j}}\Bigr)
\]
for every graph $F\in \FF_n$, where the $z_{i,j}$ are regarded as
random variables whose joint distribution is given by $\mu_n$. Let
$Z_n$ be a random weighted graph (not a randomly weighted graph!) on
$[n]$ with nodeweights $1/n$ and edgeweights $z_{i,j}$. Since $f$ is
invariant under relabeling the nodes of $F$ we get that
\[
f(F)=\E\Bigl(\frac{1}{n!}\sum_{\sigma\in S_n}\prod_{1\leq i<j\leq
n}z_{\sigma(i),\sigma(j)}^{F_{i,j}}\Bigr)=\E(t_\inj(F,Z_n)).
\]

Fix a graph $F\in \FF_m$ and for every $n\geq m$, define the graph
$F_n\in\FF_n$ by adding $n-m$ isolated labeled nodes to $F$. It is
clear that
\[
t_{\inj}(F_n,Z_n)=t_\inj(F,Z_n)
\]
and (using the properties of $f$) that
\begin{equation}\label{injeq}
f(F)=f(F_n)=\E(t_\inj(F_n,Z_n))=\E(t_\inj(F,Z_n))
\end{equation} for all
$n\geq m$. Let $F^2$ denote the disjoint union of $F$ with itself.
Using that $f(F^2)=f(F)^2$ and (\ref{injeq}) we get that
\begin{align*}
\Var(t_\inj(F,Z_n))&=\E(t_\inj(F,Z_n)^2)-\E(t_\inj(F,Z_n))^2\\
&=\E(t_\inj(F,Z_n)^2)-f(F^2)=\E(t_\inj(F,Z_n)^2-t_\inj(F^2,Z_n)) .
\end{align*}
From Lemma \ref{tavolsag} it follows that
\[
|t(F,Z_n)^2-t_\inj(F,Z_n)^2|\leq
\Bigl|2\binom{m}{2}\frac{1}{n}(t(F,Z_n)+t_\inj(F,Z_n))\Bigr|\leq
\frac{4}{n}\binom{m}{2}.
\]
and similarly
\[
|t(F^2,Z_n)-t_\inj(F^2,Z_n)|\leq \frac{2}{n}\binom{2m}{2}.
\]
Using that $t(F^2,Z_n)=t(F,Z_n)^2$, we get
\begin{align*}
|t_\inj(F,Z_n)^2-t_\inj(F^2,Z_n)| \le\frac{6m^2}{n}.
\end{align*}
Thus
\[
\Var(t_\inj(F,Z_n))=\E(t_\inj(F,Z_n)^2-t_\inj(F^2,Z_n))\le
\frac{6m^2}{n}.
\]
By Chebyshev's inequality, we have for every $\eps>0$,
\[
\Pr(|t_\inj(F,Z_n)-f(F)|>\eps)\le \frac{6m^2}{\eps^2n}.
\]
It follows by the Borel-Cantelli lemma that
\[
\lim_{n\to\infty}t_\inj(F,Z_{n^2})=f(F)
\]
with probability $1$.

Since there are only countably many different graphs $F$, we obtain
that the above convergence holds simultaneously for all graphs with
probability $1$. By Lemma \ref{tavolsag} we get that the graph
parameter $t(.,Z_{n^2})$ converges to $f(F)$ with probability one in
the space of graph parameters. Thus $f$ is in the closure of
$\TT_0(d)$.
\end{proof*}

\subsection{Finiteness conditions}

In a sense, the classes $\TT_0(d)$ and $\TT_1(d)$ are finite versions
of the classes $\TT_2(d)$ and $\TT_3(d)$. Theorem \ref{THM:FLS} tells
us that a graph parameter $f\in\TT_2(d)$ belongs to $\TT_0(d)$ if and
only if there is a positive integer $q$ such that $\rk(M(f,k))\le
q^k$ for all $k$. We prove that a much weaker condition is
sufficient.

For a graph parameter $f$, we define three infinite matrices $E(f)$,
$C(f)$ and $B(f)$, in each of which the rows and columns are indexed
by the natural numbers $0,1,2,3,\dots$, and the entries are defined
by three one-parameter families of graphs. Let $K_2^n$ consist of 2
nodes joined by $n$ edges, let $C_n$ be the $n$-cycle, and let
$K_{a,b}$ be the complete bipartite graph with color classes if sizes
$a$ and $b$. We define
\begin{align*}
E(f)_{ij}&=f(K_2^{i+j}),\\
C(f)_{ij}&=f(C_{i+j-1}),\\
B(f)_{ij}&=f(K_{i+j,2}).
\end{align*}
Note that all three matrices are submatrices of the multigraph
connection matrix $M(f,2)$.

\begin{theorem}\label{step}
For a graph parameter $f\in\TT_2(d)$, the following are equivalent:

\smallskip

{\rm (a)} $f\in\TT_0(d)$;

\smallskip

{\rm (b)} both $C(f)$ and $E(f)$ have finite rank;

\smallskip

{\rm (c)} both $B(f)$ and $E(f)$ have finite rank;

\smallskip

{\rm (d)} $M(f,2)$ has finite rank.
\end{theorem}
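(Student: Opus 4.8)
Since $f\in\TT_2(d)$, I would fix a kernel $W\in\WW(d)$ with $f=t(.,W)$ and pass to the operator $T_W$, translating each rank condition into a spectral or value-distribution property. First, $C(f)=C(t_W)$, so by \eqref{EQ:CNLAMBDA} and Lemma \ref{LEM:TW-RANK} the rank of $C(f)$ is finite iff $T_W$ has finite rank. Second, $E(f)_{ij}=f(K_2^{i+j})=\int_{[0,1]^2}W(x,y)^{i+j}\,dx\,dy$ is the Hankel (moment) matrix of the random variable $W(X,Y)$, $X,Y$ independent uniform; by Theorem \ref{semfuncpol} with $n=1$ together with Lemma \ref{rankofbf}, $E(f)$ has finite rank iff this variable has finitely supported distribution, i.e.\ $W$ takes finitely many values a.e. Third, $K_{i+j,2}$ is exactly the subdivision of the $(i+j)$-fold edge $K_2^{i+j}$, so Lemma \ref{subdiv} gives $f(K_{i+j,2})=t(K_2^{i+j},W\circ W)$; thus $B(f)$ is the Hankel matrix of $(W\circ W)(X,Y)$ and, by the same reasoning, $B(f)$ has finite rank iff $W\circ W$ takes finitely many values a.e. Since $E(f),C(f),B(f)$ are all submatrices of $M(f,2)$, (d) implies (b) and (c) at once.

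For (a)$\Rightarrow$(d): if $f=t(.,H)$ with $H$ a $[-d,d]$-weighted graph on $q$ nodes, then $f=\hom(.,H')$ for the normalization $H'$, and Theorem \ref{THM:FLS} bounds $\rk M(f,2)\le q^2$. The substantive direction is (b)$\Rightarrow$(a). Assuming $C(f),E(f)$ finite, the first translation and Corollary \ref{finiterank} give $W(x,y)=\sum_{i=1}^k\nu_ig_i(x)g_i(y)$ a.e.\ with the $g_i$ orthogonal (hence linearly independent) and $\nu_i\neq 0$; writing $\Phi(x)=(g_1(x),\dots,g_k(x))$ and $D=\diag(\nu_1,\dots,\nu_k)$ we have $W(x,y)=\Phi(x)\T D\,\Phi(y)$. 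By the second translation $W$ is finitely valued a.e. I would then choose $x_1,\dots,x_k$ whose rows $W(x_i,.)$ are finitely valued and whose vectors $\Phi(x_1),\dots,\Phi(x_k)$ are linearly independent (possible since the $g_i$ are independent, so $\Phi$ is not essentially supported in a hyperplane). The map $v\mapsto(\Phi(x_i)\T D\,v)_{i=1}^k$ is then invertible and carries $\Phi(y)$ to the vector $(W(x_i,y))_i$, which lies in a fixed finite set for a.e.\ $y$; hence $\Phi$ has finite essential range, $[0,1]$ splits into finitely many blocks on which $W$ is a.e.\ constant, and $f\in\TT_0(d)$. (Both hypotheses are genuinely used: finite rank alone fails, as $W=g\otimes g$ with $g$ continuous is rank one but no stepfunction.)

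It remains to prove (c)$\Rightarrow$(a), which I would reduce to (c)$\Rightarrow$(b); the previous paragraph then finishes. Applying Lemma \ref{LEM:TW-RANK} to $U:=W\circ W\in\WW(d^2)$, whose nonzero eigenvalues are $\lambda_k^2>0$, the implication (c)$\Rightarrow$(b) is precisely the statement: \emph{if $W$ and $W\circ W$ are both finitely valued a.e., then $T_W$ has finite rank.} I expect this to be the main obstacle. Finite-valuedness of the positive kernel $U$ alone is insufficient, as the block kernel $U=\sum_n\one_{I_n\times I_n}$ is finitely valued and PSD yet of infinite rank; what must be exploited is that $U$ is the operator square of a finitely valued $W$ (in that example the square root takes the infinitely many values $\pm|I_n|^{-1/2}$). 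My plan is the contrapositive: if $T_W$ has infinitely many nonzero eigenvalues, their magnitudes $|\lambda_k|$ form an infinite sequence tending to $0$, and I would argue from the spectral expansion $W=\sum_k\lambda_k\psi_k(x)\psi_k(y)$ together with the constraint that $W$ takes only finitely many values that $U=W\circ W$ is forced to take infinitely many values, contradicting the finiteness of $\rk B(f)$. Turning this heuristic into a proof — extracting an infinite value set of $U$ from an infinite spectrum of a finitely valued $W$ — is the crux of the theorem and the step I would budget the most effort for.
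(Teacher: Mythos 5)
Your translations of the three rank conditions, the implications (a)$\Rightarrow$(d)$\Rightarrow$(b),(c), and your proof of (b)$\Rightarrow$(a) are correct and essentially coincide with the paper's argument: the paper likewise extracts the finite value set $S$ from $E(f)$ via Theorem \ref{semfuncpol}, the decomposition $W(x,y)=\sum_i\nu_ig_i(x)g_i(y)$ from $C(f)$ via Corollary \ref{finiterank}, and then fixes finitely many points $x_1,\dots,x_k$ whose rows control all rows of $W$; your invertible map $v\mapsto(\Phi(x_i)\T D\,v)_{i=1}^k$ and the finite essential range of $\Phi$ is just a streamlined version of the paper's step of taking a common refinement of the partitions induced by the rows $W(x_i,.)$. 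Your side remark that finite rank alone is not enough is also accurate.

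The genuine gap is (c)$\Rightarrow$(a), which you explicitly leave open: you reduce it to the claim that if $W$ and $W\circ W$ are both finitely valued a.e.\ then $T_W$ has finite rank, and you offer only a heuristic for that claim, so the theorem is not proved. For comparison, the paper disposes of (c) in a few lines: by Lemma \ref{subdiv} the matrix $B(t_W)$ equals $E(t_{W\circ W})$ (you have this identity too) and $C(t_{W\circ W})$ is a submatrix of $C(t_W)$; it then declares that $W\circ W$ satisfies (b), hence is a stepfunction by the already proved implication, and finally lifts the conclusion back to $W$: since $T_{W\circ W}=T_W^2$ has finite rank and its eigenvectors with nonzero eigenvalue are one-variable stepfunctions, the same holds for $T_W$, so $W$ itself is a stepfunction. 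Two points of comparison. First, this lifting trick --- apply (b)$\Rightarrow$(a) to $W\circ W$ and transfer stepfunction structure through the operator square root --- is the piece of machinery your plan lacks. Second, your instinct about where the difficulty sits is well founded: under hypothesis (c) the rank of $C(t_W)$ is not controlled, so the paper's submatrix relation between $C(t_{W\circ W})$ and $C(t_W)$ does not by itself deliver the finiteness of $\rk\,C(t_{W\circ W})$ that is needed to say ``$W\circ W$ satisfies (b)''; the step at which your attempt stalls is exactly the step at which the paper's written justification is thinnest. So your proposal is incomplete, but it fails at the genuinely delicate point of the theorem, and completing it would require either proving your spectral claim directly or finding a correct substitute for the paper's rank-finiteness assertion for $C(t_{W\circ W})$.
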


Even though the graphs used in condition (b) in Theorem \ref{step}
are smaller, condition (c) may be more useful because it doesn't use
graphs with multiple edges.

\begin{proof}
It is trivial that (a) implies (d), which in turn implies both (b)
and (c).

\medskip

\noindent (b)$\Rightarrow$(a). Let $f=t(.,W)$ and let $X_W$ denote
the random variable $W(X_1,X_2)$ where $X_1$ and $X_2$ are chosen
uniformly at random from $[0,1]$. The $n$-th moment of $X_W$ is
\[
\int_{[0,1]^2} W(x_1,x_2)^n\,dx_1\,dx_2=t(K_2^n,W).
\]
Let a linear map $\alpha:~\R[x]\to\R$ be defined by
\[
\alpha(x^n)=t(K_2^n,W).
\]
Since the matrix $E(t_W)$ is the matrix of the bilinear form $\langle
f,g \rangle =\alpha(fg)$ in the basis $1,x,x^2,\dots$ it follows from
Theorem \ref{semfuncpol} that the distribution of $X_W$ is
concentrated on some finite set $S$. Since $C(T_W)$ has finite rank,
Corollary \ref{finiterank} implies that there is a finite system of
one-variable functions $g_1,g_2,\dots,g_k$ and signs
$\nu_i\in\{1,-1\}$ such that
\[
W(x,y)=\sum_{i=1}^k \nu_ig_i(x)g_i(y)
\]
almost everywhere. By changing $W$ on a zero measure set, we can
assume that the previous equality holds everywhere. Since the set
$\{(x,y):~W(x,y)\notin S\}$ has measure $0$, there is a set
$Z\subseteq [0,1]$ with measure $0$ such that for all $x\in
[0,1]\setminus Z$, the function $W(x,.)$ is measurable and the set
$\{y\in[0,1]:~W(x,y)\notin S\}$ has measure $0$. For every fixed $x$,
the function $W(x,.)$ is an element of the finite dimensional
subspace generated by $g_1,g_2,\dots, g_k$, and so there are points
$x_1,x_2,\dots,x_k\in [0,1]\setminus Z$ such that every function
$W(x,.)$, $x\in [0,1]\setminus Z$ is a linear combination of the the
functions $W(x_i,.)$.

For each $i$, there is partition $\{U^i_0,U^i_1,\dots,U^i_s\}$ of
$[0,1]\setminus Z$ into measurable sets, where $s=|S|$, such that
$\lambda(U^i_0)=0$ and $W(x_i,.)$ is constant on each $U^i_j$, $1\le
j\le s$. Combining the sets $U^i_0$ into a single $0$-measure set,
and taking a common refinement of the partitions on the rest, we get
a finite partition $\{U_0,U_1,\dots,U_N\}$ such that $\lambda(U_0)=0$
and each $W(x_i,.)$ is constant on each $U_j$, $1\le j\le N$. Hence
every function $W(x,.)$, $x\in [0,1]\setminus Z$, is constant on
every set $U_j$, $1\le j\le N$. From the symmetry of $W$ it follows
that $W$ is constant on every set $U_i\times U_j$, and so $W$ is
equal to a stepfunction almost everywhere.

\medskip

\noindent (c)$\Rightarrow$(a). It follows from Lemma \ref{subdiv}
that the matrix $B(t_W)$ is the same as $E(t_{W\circ W})$ and that
$C(t_{W\circ W})$ is a submatrix of $C(t_W)$. So $W\circ W$ satisfies
(b), and so we already know that $W\circ W$ is a stepfunction. Thus
$T_{W\circ W}$ has finite rank and every eigenvector of $T_{W\circ
W}$ corresponding to a nonzero eigenvalue is a one-variable
stepfunction. Since $T_{W\circ W}$ is the square of $T_W$, it follows
that the same statement holds for $T_W$. This implies that $W$ is a
stepfunction.
\end{proof}

\subsection{Homomorphisms into randomly weighted graphs}

We prove the following generalization of Theorem \ref{THM:FLS}.

\begin{theorem}\label{genfls}
Let $f$ be a graph parameter. Then the following are equivalent.

\smallskip

{\rm(1)} There is a randomly weighted graph $H$ such that $f(F)=\hom
(F,H)$ for all graphs $F$.

\smallskip

{\rm(2)} $f$ is reflection positive, multiplicative and
$\rk(M(2,f))<\infty$.

\smallskip

{\rm(3)} $f$ is weakly reflection positive, multiplicative and
$\rk(M(2,f))<\infty$.
\end{theorem}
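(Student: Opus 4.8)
The plan is to prove the cycle $(1)\Rightarrow(2)\Rightarrow(3)\Rightarrow(1)$, in which the first two implications are short and the last carries all the content. For $(1)\Rightarrow(2)$, multiplicativity is immediate from the defining formula \eqref{homtorwg}. After dividing the nodeweights of $H$ by $c=\sum_i\alpha_i$ I may assume $H$ is normalized, so that $\hom(.,H)=t(.,W_H)$ for the $[-d,d]$-graphon $W_H$; then $M(n,f)\succeq 0$ for all $n$ by Proposition \ref{PROP:NEC} (the non-normalized case follows because $M(n,\hom(.,H))=c^{-n}D\,M(n,t(.,H))\,D$, where $D_{F,F}=c^{|V(F)|}$ is a positive diagonal congruence preserving both semidefiniteness and rank). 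To see $\rk(M(2,f))<\infty$, write each edge variable $B_{uv}$ as taking the value $\lambda_{u,v,k}$ with probability $q_{u,v,k}$ ($1\le k\le p_{uv}$), and split every $F\in\GG_2$ as $z_{1,2}^{\,a}F'$, where $a$ is the multiplicity of the edge between the two labelled nodes and $F'$ is the remaining graph. Using $\E(B_{uv}^{a_1+a_2})=\sum_k q_{u,v,k}\lambda_{u,v,k}^{a_1}\lambda_{u,v,k}^{a_2}$ one gets
\[
M(2,f)_{F_1,F_2}=\sum_{u,v\in V(H)}\ \sum_{k=1}^{p_{uv}}\alpha_u\alpha_v\,q_{u,v,k}\bigl(\lambda_{u,v,k}^{a_1}h(F_1';u,v)\bigr)\bigl(\lambda_{u,v,k}^{a_2}h(F_2';u,v)\bigr),
\]
where $h(F';u,v)$ is the weighted number of homomorphisms of $F'$ into $H$ sending the two labelled nodes to $u,v$. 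This is a finite sum of rank-one matrices, so $\rk(M(2,f))\le\sum_{u,v}p_{uv}<\infty$. The implication $(2)\Rightarrow(3)$ is trivial since $\PP_n\subseteq\QQ_n$.

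For $(3)\Rightarrow(1)$ I first normalize: if $c=f(K_1)>0$, replace $f$ by $\tilde f(F)=c^{-|V(F)|}f(F)$, which is again multiplicative and, since all graphs in $\FF_n$ have exactly $n$ nodes, still weakly reflection positive, with $\rk(M(2,\tilde f))=\rk(M(2,f))$ (the scaling is a diagonal congruence); the degenerate case $c=0$ is realized by the empty randomly weighted graph. So assume $f(K_1)=1$. The submatrix $E(f)$ of $M(2,f)$ is a positive semidefinite (weak reflection positivity on $\R[z_{1,2}]=\PP_2$) Hankel matrix of finite rank, so by the finite-rank moment theory $f(K_2^{\,n})=\sum_{k=1}^{N}c_k\lambda_k^{\,n}$ with distinct reals $\lambda_k$, weights $c_k>0$, and $\sum_k c_k=f(K_2^0)=f(K_1)^2=1$; hence $|f(K_2^n)|\le d^n$ with $d=\max_k|\lambda_k|$, and the global edge measure $\bar\mu=\int W(x,y)\,dx\,dy=\sum_k c_k\delta_{\lambda_k}$ is supported on the finite set $T=\{\lambda_1,\dots,\lambda_N\}$. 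Now $f$ is normalized, multiplicative, weakly reflection positive with $|f(K_2^n)|\le d^n$, so Theorem \ref{chr-t2} yields a $[-d,d]$-graphon $W$ with $f=t(.,W)$; write $W_m$ for its $m$-th moment function, so that $t(F,W)=\int\prod_{ij\in E(F)}W_{F_{ij}}(x_i,x_j)\,dx$, and note that $W(x,y)$ is supported in $T$ for almost all $(x,y)$ (since $\bar\mu$ is).

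The heart of the matter is that the single finite-rank hypothesis on $M(2,f)$ controls every moment function $W_m$ at once. For fixed $m$, let $P^{(m)}_i\in\GG_2$ be the path of $i$ edges joining the two labelled nodes in which each edge has multiplicity $m$, and let $Y^{(m)}\in\GG_2$ be the cherry with both edges of multiplicity $m$. Then $P^{(m)}_iP^{(m)}_j$ is the cycle $C_{i+j}$ with $m$-fold edges and $(Y^{(m)})^{\,i+j}$ is $K_{i+j,2}$ with $m$-fold edges, so that
\[
\bigl[f(P^{(m)}_iP^{(m)}_j)\bigr]_{i,j}=\bigl[t(C_{i+j},W_m)\bigr]_{i,j},\qquad \bigl[f((Y^{(m)})^{\,i+j})\bigr]_{i,j}=\bigl[t(K_{i+j,2},W_m)\bigr]_{i,j}
\]
are principal submatrices of $M(2,f)$, hence of finite rank. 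By \eqref{EQ:CNLAMBDA} the first is $C(t_{W_m})$ up to an index shift, and by Lemma \ref{subdiv} the second equals $E(t_{W_m\circ W_m})=B(t_{W_m})$. Thus $C(t_{W_m})$ finite rank gives, via Lemma \ref{LEM:TW-RANK} and Corollary \ref{finiterank}, that $T_{W_m}$ has finite rank, while $E(t_{W_m\circ W_m})$ finite rank together with $C(t_{W_m\circ W_m})$ (a submatrix of $C(t_{W_m})$) shows exactly as in the proof of Theorem \ref{step} that $W_m\circ W_m$, and then $W_m$, is a stepfunction. Since $W(x,y)$ is supported in the fixed $N$-point set $T$, it is determined by the finitely many moments $W_0\equiv1,W_1,\dots,W_{N-1}$ through an invertible Vandermonde relation, so each weight $q_l$ in $W(x,y)=\sum_l q_l(x,y)\delta_{\lambda_l}$ is a fixed linear combination of these stepfunctions. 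Taking a common refinement of their finitely many step-partitions yields a single finite partition $[0,1]=\bigcup_{i=1}^q S_i$ on whose blocks $S_i\times S_j$ the distribution $W(x,y)$ is almost everywhere a constant $\mu_{ij}$ supported in $T$; this exhibits $W$ as the graphon $W_H$ of the randomly weighted graph $H$ with nodes $S_i$, nodeweights $\lambda(S_i)$, and edge $ij$ labelled by $\mu_{ij}$, whence $f=t(.,W_H)=\hom(.,H)$.

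I expect the crux to be the passage from the one scalar condition $\rk(M(2,f))<\infty$ to a uniform finite combinatorial structure: namely the observation that cycles and cherries with $m$-fold edges embed both $C(t_{W_m})$ and $B(t_{W_m})$ as submatrices of one fixed finite-rank matrix, forcing \emph{all} moment functions to be stepfunctions and the edge distributions to have uniformly bounded finite support. The bookkeeping that secures a \emph{common} step-partition, and the check that finitely many moments (bounded by $|T|=\rk E(f)$) already pin down the distribution-valued graphon, is where care is needed; the positive-semidefiniteness inputs (weak reflection positivity of $E(f)$ and Theorem \ref{semfuncpol}) enter precisely to upgrade finite rank into genuine finite support rather than mere finite dimensionality.
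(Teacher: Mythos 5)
Your proof follows essentially the same route as the paper's. For (3)$\Rightarrow$(1) you use the identical chain: weak reflection positivity on $\PP_2\cong\R[z_{1,2}]$ makes $E(f)$ a finite-rank PSD Hankel matrix, whence $f(K_2^n)$ is the moment sequence of a finitely supported variable and is exponentially bounded; Theorem \ref{chr-t2} then produces a $[-d,d]$-graphon $W$; the moment functions $W_m$ are shown to be stepfunctions by embedding $C(t_{W_m})$ and $B(t_{W_m})$ as submatrices of $M(2,f)$ via $m$-fold paths and $m$-fold complete bipartite graphs (your cherries $Y^{(m)}$ generate exactly the paper's graphs $K_{a;m}$), and the finitely supported edge distribution plus a Vandermonde/common-refinement argument assembles the randomly weighted graph --- this is precisely the paper's endgame. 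Your only real departure is in (1)$\Rightarrow$(2): where the paper re-examines the Schur-product argument from the proof of Proposition \ref{PROP:NEC}, you give an explicit rank-one decomposition of $M(2,f)$ by splitting each $F\in\GG_2$ as $z_{1,2}^a F'$ and expanding $\E(B_{uv}^{a_1+a_2})=\sum_k q_{u,v,k}\lambda_{u,v,k}^{a_1}\lambda_{u,v,k}^{a_2}$. That computation is correct, self-contained, and yields the explicit bound $\rk(M(2,f))\le\sum_{u,v}p_{uv}$; it is a legitimate (and arguably cleaner) substitute for the paper's argument.

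There is, however, one genuine gap, at the start of (3)$\Rightarrow$(1). You dispose of the non-normalized situation by saying ``if $c=f(K_1)>0$ rescale; the degenerate case $c=0$ is realized by the empty randomly weighted graph.'' This silently assumes two things that are not automatic: that $f(K_1)\ge 0$, and that $f(K_1)=0$ forces $f\equiv 0$ (otherwise the empty graph does not realize $f$). Both require proof from the hypotheses, and the paper devotes a paragraph to exactly this. The argument you are missing: since $K_1\cdot K_1=K_1$ in $\PP_1$, weak reflection positivity gives $f(K_1)=f(K_1^2)\ge 0$; and if $f(K_1)=0$ while $f(F)\neq 0$ for some $F\in\FF_n$, then multiplicativity gives $f(e_n)=f(K_1)^n=0$ for the edgeless graph $e_n$ (the unit of $\FF_n$), so using $Fe_n=F$ and $e_n^2=e_n$, weak reflection positivity yields
\[
0\le f\bigl((F-\lambda e_n)^2\bigr)=f(F^2)-2\lambda f(F)
\]
for every real $\lambda$, which is impossible when $f(F)\neq 0$. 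With this paragraph inserted, your case split becomes legitimate and the rest of your proof goes through.
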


As a corollary, we obtain the following characterization of simple
graph parameters representable as homomorphism functions:

\begin{corollary}
If $f$ is weakly reflection positive, multiplicative
and $\rk(M(2,f))<\infty$, then there is a weighted graph $H$ such
that $\hom (F,H)=f(F)$ for all simple graphs $F$.
\end{corollary}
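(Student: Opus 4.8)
The plan is to derive this directly from Theorem \ref{genfls} by collapsing the randomly weighted graph it produces down to an ordinary weighted graph, exploiting the fact that simple graphs see only first moments of the edge variables. First I would observe that the three hypotheses here---weak reflection positivity, multiplicativity, and $\rk(M(2,f))<\infty$---are exactly condition (3) of Theorem \ref{genfls}. Invoking the implication (3)$\Rightarrow$(1), I obtain a randomly weighted graph $H$ with nonnegative nodeweights $\alpha_i$ and independent random edgeweights $B_{i,j}$ of finite distribution such that $f(F)=\hom(F,H)$ for every multigraph $F$, and in particular for every simple graph $F$.

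Next I would pass from $H$ to the ordinary weighted graph $H'$ on the same node set, keeping the nodeweights $\alpha_i$ but replacing each random edgeweight $B_{i,j}$ by its expectation $\beta'_{i,j}=\E(B_{i,j})$. The key point is that in the defining formula \eqref{homtorwg} the multiplicity $F_{i,j}$ of an edge enters only through the moment $\E(B_{\varphi(i),\varphi(j)}^{F_{i,j}})$. When $F$ is simple, every edge has multiplicity $F_{i,j}=1$, so the only moments appearing are the first moments $\E(B_{\varphi(i),\varphi(j)})=\beta'_{\varphi(i),\varphi(j)}$. Comparing termwise with the homomorphism formula \eqref{homform} for the weighted graph $H'$, I conclude that $\hom(F,H)=\hom(F,H')$ for every simple graph $F$, whence $f(F)=\hom(F,H')$ for all simple $F$, as required.

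There is essentially no obstacle: the content is carried entirely by Theorem \ref{genfls}, and the corollary is just the observation (already flagged in Remark \ref{REM:RW}) that moving the expectation inside each edge factor replaces a randomly weighted graph by an ordinary one whose edgeweights are the edge-variable means---a substitution that simple graphs cannot detect, precisely because they never raise an edge variable to a power greater than one. The one routine point to check is that $H'$ is a legitimate weighted graph in the sense of Section \ref{homom}, which holds because the nodeweights $\alpha_i$ remain nonnegative and each $\beta'_{i,j}=\E(B_{i,j})$ is a finite real number, the variables $B_{i,j}$ having finite distribution.
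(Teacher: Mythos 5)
Your proof is correct and is precisely the derivation the paper intends: the hypotheses are exactly condition (3) of Theorem \ref{genfls}, and passing to the weighted graph with edgeweights $\E(B_{i,j})$ (the observation already made in Remark \ref{REM:RW}) leaves $\hom(F,\cdot)$ unchanged on simple graphs, since only first moments appear in \eqref{homtorwg} when every $F_{i,j}=1$. The only cosmetic point is that the paper's definition of a weighted graph requires \emph{positive} nodeweights, so any zero-weight nodes of $H'$ should be deleted (they contribute nothing to the homomorphism sum), which does not affect the argument.
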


\begin{proof*}{Theorem \ref{genfls}}
(1)$\Rightarrow$(2) Let $H$ be a randomly weighted graph. We may
scale the nodeweights so that they sum to $1$. Then $f=t(.,W_H)$. By
Proposition \ref{PROP:NEC} we know that $f$ is multiplicative,
normalized and reflection positive. We need to prove that $M(f,2)$
has finite rank.

This follows easily by looking at the proof of Proposition
\ref{PROP:NEC} carefully. In \eqref{EQ:MNF}, the integral can be
replaced by a finite sum with $|V(H)|^n$ terms, since the integrand
depends only on the nodes of $H$ represented by the intervals
containing each $x_i$. Furthermore, each matrix $M(x)$ is the Schur
product of a finite number of matrices $M(x,i,j)$. Each $M(x,i,j)$ is
a moment matrix of a random variable with finite range, and hence it
has finite rank. Hence every matrix $M(x)$ has finite rank, and so
$M(f,n)$ has finite rank.

\medskip

\noindent (2)$\Rightarrow$(3) is trivial.

\medskip

\noindent (3)$\Rightarrow$(1) If $f$ is identically zero, then we
regard it as the homomorphism function into the empty graph. Assume
that $f$ is not identically zero. First we prove that $f(K_1)>0$
where $K_1$ is the one-node graph. Regarding $K_1$ as an element of
the algebra $\QQ_1$, we get from the weak reflection positivity of
$f$ that $f(K_1)^2=f(K_1)\geq 0$. Now assume that $f(K_1)=0$. Let
$F\in\FF_n$ be a graph with $f(F)\neq 0$ for some natural number $n$
and let $e_n\in\FF_n$ denote graph with $n$ labeled nodes and no edge
(the unit element in $\FF_n$). By multiplicativity, $f(e_n)=0$. By
weak reflection positivity we get for every real $\lambda$ that
\[
0\leq f((F-\lambda e_n)^2)=f(F^2)-2\lambda f(F),
\]
which is a contradiction.

Replacing $f$ by $f/f(K_1)^{|V(F)|}$, we may assume that $f$ is
normalized. The matrix $E(f)$ is positive semidefinite with finite
rank. This implies that the sequence $f(K_2^n),~~n=0,1,2,\dots$ is
the moment sequence of some random variable $X$ whose values are from
a finite set. It follows that there is a number $d>0$ such that
$|f(K_2^n)|\leq d^n$ for every $n$. By Theorem \ref{chr-t2}, this
implies that there is a $[-d,d]$-graphon $W$ such that $f(F)=t(F,w)$
for all graphs $F$.

Let $(W_0,W_1,\dots)\in\MM(d)$ be the moment function sequence
representing $W$. We show that each function $W_i$ is a stepfunction.
By Theorem \ref{step}, it is enough to show that $C(t_{W_i})$ and
$B(t_{W_i})$ have finite rank. This will follow if we show that both
are submatrices of $M(2,f)$.

Let $P_{a;i}\in\GG_2$ denote the path of length $a$ in which each
edge is $i$-fold and the two endpoints are labeled by $1$ and $2$.
Let $K_{a;i}\in\GG_2$ denote the complete bipartite graph $K_{2,a}$
in which each edge is $i$-fold and the nodes from the color class
with two nodes are labeled by $1$ and $2$. It is clear from the
definitions that the $\{P_{a;i}|a\geq 2\}\times \{P_{a;i}|a\geq 2\}$
sub-matrix of $M(2,f)$ is identical with $C(t_{W_i})$ and the
$\{K_{a;i}|a\geq 0\}\times \{K_{a;i}|a\geq 0\}$ sub-matrix of
$M(2,f)$ is identical with $B(t_{W_i})$ for all $i$. This proves that
each $W_i$ is a stepfunction.

Next we argue that the $W_i$ can be considered stepfunctions with the
same steps. For every pair $x,y\in [0,1]$, $W(x,y)$ is a random
variable with values in $[-d,d]$. Let $Y$ be the random variable
which is obtained by selecting two random points $x,y$ uniformly form
$[0,1]$ and then evaluating the random variable $W(x,y)$. It is clear
that
\[
\E(Y^i)=\int_{[0,1]^2}W_i(x,y)=f(K_2^i)=\E(X^i),
\]
and thus the distribution of $Y$ is the same as the distribution of
$X$, which is concentrated on the finite set $S$. It follows that for
almost all pairs $x,y\in [0,1]$ the distribution of $W(x,y)$ is
concentrated on $S$ and at such places the first $|S|$ moments
$W_1(x,y),\dots,W_{|S|}(x,y)$ of $W(x,y)$ determine all other
moments. This means that all of the functions $W_i$ are stepfunctions
with the same steps that are intersections of the steps of
$W_1,W_2,\dots,W_{|S|}$.

Thus there is a partition $[0,1]=P_1\cup P_2\cup\dots\cup P_t$ such
that the variables $W(x,y)$ are constant on $P_i\times P_j$ for all
$1\leq i,j\leq t$. This defines the structure of a randomly weighted
graph $H$ on $\{1,2,\dots,t\}$, in which the nodeweights are the
sizes of the sets $P_i$, and the edgeweight of $ij$ is the random
variable $W(x,y)$ for any $(x,y)\in P_i\times P_j$. It is clear that
$f(F)=t(F,W)=\hom(F,H)$ for every graph $F$.
\end{proof*}

\subsection{The growth rate of connection ranks}

We have seen (Theorem \ref{THM:FLS}) that for a graph parameter
$f\in\TT_0(d)$, the connection ranks $\rk(M(f,k))$ are bounded by
$q^n$ for an appropriate $q$. What can we say about graph parameters
in the larger class $\TT_1(d)$? The following theorem gives the
answer.

\begin{theorem}\label{genfls2}
If $f$ is a weakly reflection positive and multiplicative graph
parameter, then $f$ belongs to one of the following three types.

\smallskip

{\rm(1)} $\rk(M(f,n))=\infty$ for all $n\geq 2$.

\smallskip

{\rm(2)} $\rk(M(f,n))^{1/n}\to c$ $(n\to\infty)$ with some $c\ge 1$,
and there is a weighted graph $H$ such that $f=\hom(.,H)$.

\smallskip

{\rm(3)} $\rk(M(f,n))^{1/n^2}\to c$ $(n\to\infty)$ with some $c>1$,
and there is a proper randomly weighted graph $H$ such that
$f=\hom(.,H)$.
\end{theorem}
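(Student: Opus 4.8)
The plan is to separate the three alternatives according to whether $\rk(M(f,2))$ is infinite, and, when it is finite, whether the randomly weighted graph produced by Theorem~\ref{genfls} is proper. After normalizing $f$ so that $f(K_1)=1$ (possible whenever $f\not\equiv 0$, exactly as in the proof of Theorem~\ref{genfls}; the zero parameter is the degenerate ``empty graph'' case, which I set aside), I would first record the monotonicity $\rk(M(f,n))\le\rk(M(f,n+1))$. This follows from the label-adding embedding $\GG_n\hookrightarrow\GG_{n+1}$, $F\mapsto F^+$, where $F^+$ adjoins an isolated node labelled $n+1$: since $F^+G^+=(FG)^+$ and $f((FG)^+)=f(FG)f(K_1)=f(FG)$, the matrix $M(f,n)$ is a principal submatrix of $M(f,n+1)$. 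Hence if $\rk(M(f,2))=\infty$ then $\rk(M(f,n))=\infty$ for all $n\ge 2$, which is alternative (1). In the remaining case $\rk(M(f,2))<\infty$, Theorem~\ref{genfls} yields a randomly weighted graph $H$ on a node set $[t]$ with $f=\hom(.,H)$, and it remains only to determine the growth rate of $\rk(M(f,n))=\dim(\QQ_n/H)$; the partition into (1), (2), (3) is then exhaustive and mutually exclusive because the three growth rates are incompatible.

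The key structural input is supermultiplicativity of the ranks. Sending $(F_1,F_2)\in\GG_a\times\GG_b$ to the disjoint union of $F_1$ with a copy of $F_2$ whose labels are shifted to $\{a+1,\dots,a+b\}$ exhibits $M(f,a)\otimes M(f,b)$ as a submatrix of $M(f,a+b)$ (multiplicativity of $f$ is used, since the two label blocks share no nodes). Therefore $\rk(M(f,a+b))\ge\rk(M(f,a))\rk(M(f,b))$, so $\log\rk(M(f,n))$ is superadditive and, by Fekete's lemma, $\tfrac1n\log\rk(M(f,n))$ converges in $[0,\infty]$. When $H$ is an ordinary weighted graph, Theorem~\ref{THM:FLS} gives $\rk(M(f,n))\le t^n$, so this limit is finite; together with $\rk(M(f,n))\ge 1$ this yields $\rk(M(f,n))^{1/n}\to c$ with $c\in[1,t]$, which is alternative (2).

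The substance of the proof is alternative (3), where $H$ is proper, i.e.\ $p:=p(H)\ge 2$. Write $B_{ab}$ as taking distinct values $\gamma_{ab,s}$ with probabilities $\pi_{ab,s}$, $s=1,\dots,p_{ab}$. The obstruction is that moments do not multiply: when forming $F_1F_2$ the labelled--labelled multiplicities add, giving $\beta_{ab,k+\ell}=\sum_s\pi_{ab,s}\gamma_{ab,s}^{k}\gamma_{ab,s}^{\ell}$, which only factorizes after summing over a choice $s_{ij}$ on each labelled pair. I would therefore index a boundary by an enriched datum $\Psi=(\psi,(s_{ij})_{i<j})$, with $\psi:[n]\to[t]$ and $s_{ij}\in[p_{\psi(i)\psi(j)}]$; separating the two (independent) extensions to the unlabelled parts writes $\hom(F_1F_2,H)=\sum_\Psi w(\Psi)\,u_{F_1}(\Psi)u_{F_2}(\Psi)$, whence
\[
\rk(M(f,n))\ \le\ \#\{\Psi\}\ =\ \sum_{\psi:[n]\to[t]}\ \prod_{1\le i<j\le n}p_{\psi(i)\psi(j)}\ \le\ t^{n}\max_{\psi}\prod_{i<j}p_{\psi(i)\psi(j)}.
\]
For the matching lower bound I use the pure-edge subalgebra $\PP_n\subseteq\QQ_n$: restricting $f$ to $\PP_n$ gives, by Theorem~\ref{semfuncpol}, integration against the law $\mu_n$ of $(B_{\psi(i)\psi(j)})_{i<j}$ with $\psi$ chosen randomly, so $\rk(M(f,n))\ge\rk(f|_{\PP_n})$, and by Lemma~\ref{rankofbf} the latter equals the number of atoms of $\mu_n$, which is at least $\max_\psi\prod_{i<j}p_{\psi(i)\psi(j)}$. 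Since the two bounds agree up to the factor $t^n=e^{o(n^2)}$, it remains to evaluate $\tfrac1{n^2}\log\max_\psi\prod_{i<j}p_{\psi(i)\psi(j)}$. Grouping $\psi$ by its type vector $q=(n_a/n)$ turns this into the mean-field variational problem
\[
\frac1{n^2}\max_{\psi}\sum_{i<j}\log p_{\psi(i)\psi(j)}\ \longrightarrow\ \tfrac12\max_{q}\sum_{a,b}q_aq_b\log p_{ab}=:\log c,
\]
the maximum running over probability vectors $q=(q_1,\dots,q_t)$, with $O(n)$ diagonal correction and $O(1/n)$ rounding error. Thus $\rk(M(f,n))^{1/n^2}\to c$, and $c>1$ precisely when some $p_{ab}\ge 2$, i.e.\ when $H$ is proper. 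The main obstacle I anticipate is exactly this last block: setting up the enriched boundary so that the non-multiplicativity of the moment sequences is absorbed cleanly (yielding the count $\sum_\psi\prod p_{\psi(i)\psi(j)}$ rather than a wasteful over- or under-estimate), and then verifying that the upper and lower bounds share the same $n^2$-exponential rate and that the quadratic optimum over the simplex is genuinely attained in the limit.
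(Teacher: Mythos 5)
Your proposal is correct, and in its essential content it follows the same route as the paper: reduce via Theorem~\ref{genfls} to $f=\hom(.,H)$ for a randomly weighted graph $H$ once $\rk(M(f,2))<\infty$, then split according to whether $H$ is proper. Your treatment of the proper case is the paper's argument in different clothing: the enriched boundary $\Psi=(\psi,(s_{ij}))$ is exactly a rank-one decomposition of the matrices $M(x,i,j)$ (moment matrices of finite-range variables) used in the paper, and it yields the same upper bound $\sum_\psi\prod_{i<j}p_{\psi(i)\psi(j)}\le |V(H)|^n 2^{n^2A(H)}$ that the paper obtains via Schur products in Lemma~\ref{LEM:RANK}; your lower bound via the atoms of the mixture measure on $\PP_n$ together with Lemma~\ref{rankofbf} is precisely the content of Lemma~\ref{exactrank}; and your variational constant $\log c=\tfrac12\max_q\sum_{a,b}q_aq_b\log p_{ab}$ is the paper's $A(H)$ from \eqref{EQ:A-DEF}, with the same $O(n)$ diagonal and rounding corrections. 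The one genuine divergence is case (2): the paper invokes the exact dimension formula of Lemma~\ref{LEM:OLD-RK}(a) (counting maps $[n]\to V(H)$ up to automorphisms of $H$), which sandwiches $\rk(M(f,n))$ between $|V(H)|^n/|V(H)|!$ and $|V(H)|^n$ and thus identifies $c=|V(H)|$, whereas you derive mere existence of the limit from supermultiplicativity plus Fekete's lemma and finiteness from the bound $\rk(M(f,k))\le |V(H)|^k$ of Theorem~\ref{THM:FLS}. This is a legitimate and slightly more self-contained alternative (it avoids the rank formula of \cite{L}), at the cost of not identifying $c$ --- which the statement does not require. One small point to state carefully in your supermultiplicativity step: the map $(F_1,F_2)\mapsto F_1\sqcup F_2^{\mathrm{shift}}$ need not be injective on isomorphism types (unlabeled components can be reattributed between the two factors), but pairs with the same image give identical rows and columns of $M(f,a)\otimes M(f,b)$, so the Kronecker product is a submatrix of $M(f,a+b)$ up to duplication of rows and columns, and the rank inequality survives.
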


\begin{remark}\label{REM:M1-FIN}
Finiteness of the rank of the first connection matrix $M(f,1)$ is not
enough here. In fact, let $W\in\WW$ be a function such that its
measure preserving automorphism group (the group of invertible
measure preserving maps $\varphi:~[0,1]\to[0,1]$ such that
$W(\varphi(x),\varphi(y)) =W(x,y)$) is transitive. (For example,
$W(x,y)=|x-y|$ is such a function.) Then $M(t_W,1)$ has rank $1$.
However, such functions may be far from being stepfunctions.
\end{remark}

Before proving Theorem \ref{genfls2}, we remark that the limiting
constants $c$ in (2) and (3) can be described easily, once we know
that $f$ is given by a randomly weighted graph. In the case when
$f=t(.,H)$ for an ordinary weighted graph $H$, the rank of $M(f,n)$
was described in \cite{L}. We may assume that $H$ has no twin nodes,
since we can identify twin nodes in $H$ without changing $f$. Let us
state also a description the dimension of $\PP_n/f$.

\begin{lemma}\label{LEM:OLD-RK}
Let $f=t(.,H)$, where $H$ is a weighted graph without twin nodes.
Then

\smallskip

{\rm(a)} The dimension of $\QQ_n/f$ is equal to the number of
non-equivalent maps $[n]\to V(H)$, where two maps $\varphi,\psi$ are
equivalent if there is an automorphism $\alpha$ of $H$ such that
$\varphi\alpha=\psi$.

\smallskip

{\rm(b)} The dimension of $\PP_n/f$ is equal to the number of
non-equivalent maps $[n]\to V(H)$, where two maps $\varphi,\psi$ are
equivalent if there is an isomorphism $\alpha$ between the subgraphs
of $H$ induced by ${\rm Rng}(\varphi)$ and ${\rm Rng}(\psi)$ such
that $\varphi\alpha=\psi$.
\end{lemma}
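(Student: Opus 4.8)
The plan is to compute both dimensions by diagonalizing the bilinear form $\langle\cdot,\cdot\rangle_f$ explicitly. Normalizing $H$ so that $f=\hom(\cdot,H)$, for a map $\psi:[n]\to V(H)$ and a graph $F\in\GG_n$ let $\hom_\psi(F,H)$ denote the partial sum in \eqref{homform} in which the labeled nodes are fixed by $\psi$ and only the unlabeled nodes are summed over (omitting the labeled node-weights). Expanding \eqref{homform} for a product $F_1F_2$, and using that gluing $F_1$ and $F_2$ along $[n]$ keeps their unlabeled parts disjoint, one obtains
\[
f(F_1F_2)=\sum_{\psi:[n]\to V(H)}\Bigl(\prod_{i\in[n]}\alpha_{\psi(i)}\Bigr)\,\hom_\psi(F_1,H)\,\hom_\psi(F_2,H).
\]
Writing $w_\psi=\prod_{i\in[n]}\alpha_{\psi(i)}>0$ and $v(F)_\psi=\hom_\psi(F,H)$, this displays $\langle\cdot,\cdot\rangle_f$ as the pull-back along the linear map $v:F\mapsto v(F)$ of the strictly positive weighted inner product on $\R^{V(H)^n}$. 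Since the weights are positive, the radical $\II(\QQ_n,f)$ equals $\ker v$, so $\dim(\QQ_n/f)=\dim v(\QQ_n)$. The identical computation with $\FF_n$ in place of $\GG_n$ gives $\dim(\PP_n/f)=\dim v(\PP_n)$, where now $v(F)_\psi=\prod_{ij\in E(F)}\beta_{\psi(i),\psi(j)}$ involves no unlabeled nodes.

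Next I would observe that $v$ is an algebra homomorphism from $\QQ_n$ (with the gluing product) to $\R^{V(H)^n}$ (with pointwise product): because the unlabeled parts stay disjoint after gluing, $\hom_\psi(F_1F_2,H)=\hom_\psi(F_1,H)\hom_\psi(F_2,H)$. Hence $\AA:=v(\QQ_n)$ is a unital subalgebra of the function algebra on the finite set $V(H)^n$, its unit being $v(e_n)\equiv 1$ for the edgeless $e_n$. For such an algebra the finite Stone--Weierstrass theorem gives that $\dim\AA$ equals the number of blocks of the partition of $V(H)^n$ into the common level sets of $\AA$, i.e.\ of the relation $\psi\approx\varphi$ defined by $\hom_\psi(F,H)=\hom_\varphi(F,H)$ for all $F\in\GG_n$. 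So both parts reduce to counting $\approx$-classes (and the $\FF_n$-analogue $\approx'$), and it remains to identify these relations with those in the statement.

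For (a) the easy inclusion is that ${\rm Aut}(H)$-equivalent maps are $\approx$-equivalent: if $\varphi=\alpha\psi$ with $\alpha\in{\rm Aut}(H)$, relabelling the summed homomorphisms by $\alpha$, which preserves all node- and edge-weights, gives $\hom_\psi(F,H)=\hom_\varphi(F,H)$. The substantial direction, and the main obstacle, is the converse: if $\psi$ and $\varphi$ cannot be separated by any gadget $F\in\GG_n$, then some automorphism of $H$ carries $\psi$ to $\varphi$. This is the pointed form of the statement that homomorphism numbers determine a twin-free weighted graph up to isomorphism, and it is exactly here that the no-twin hypothesis is indispensable, since twins could be merged to lower the dimension. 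The plan to establish it is to show that single-rooted gadgets produce, for each coordinate $i$, all ${\rm Aut}(H)$-invariant functions of $\psi(i)$, because the rooted homomorphism profile of a node pins it down up to automorphism precisely when $H$ is twin-free, while the edge-gadgets $z_{i,j}$ supply the couplings $\beta_{\psi(i),\psi(j)}$; one then argues that agreement of all these data forces the partial map $\psi(i)\mapsto\varphi(i)$ to extend to a global automorphism. Invoking the tuple-pointed uniqueness theorem directly identifies $\approx$ with diagonal ${\rm Aut}(H)$-orbit equivalence, which is (a).

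Part (b) is the same argument run with the family $\FF_n$, whose graphs have node set exactly $[n]$ and hence no unlabeled nodes. Consequently $v(F)_\psi$ depends only on the edge-weights $\beta_{\psi(i),\psi(j)}$ among the images, that is, on the weighted subgraph of $H$ induced on ${\rm Rng}(\psi)$ together with the labelling $\psi$; no gadget can probe $H$ outside the image. The relation $\approx'$ is therefore generated by the functions $\psi\mapsto\beta_{\psi(i),\psi(j)}$, and unwinding the definitions identifies it with the relation of (b): $\psi\approx'\varphi$ iff the assignment $\psi(i)\mapsto\varphi(i)$ is a weight-preserving, label-compatible isomorphism between the induced subgraphs $H[{\rm Rng}(\psi)]$ and $H[{\rm Rng}(\varphi)]$, with twin-freeness again ruling out spurious coincidences of the $\beta$-patterns. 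I expect the only delicate point to be, as in (a), the passage from pairwise agreement to an honest isomorphism, now of induced subgraphs rather than of all of $H$.
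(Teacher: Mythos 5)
Your reduction machinery is sound, and for part (b) it is essentially the paper's own proof. The paper establishes (b) via Lemma \ref{exactrank}: it writes $f$ on $\PP_n\cong\R[\{z_{i,j}\}]$ as a positive combination of point evaluations, $f(p)=\sum_{(L,\varphi)}h(L,\varphi)\,p(\lambda)$, and invokes Lemma \ref{rankofbf} to conclude that $\dim(\PP_n/f)$ equals the number of distinct evaluation points, i.e.\ of distinct matrices $(\beta_{\psi(i),\psi(j)})_{i<j}$. Your positive-weight argument that the radical equals $\ker v$, plus the finite Stone--Weierstrass step, is exactly Lemma \ref{rankofbf} in different clothing. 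The ``delicate point'' you flag at the end of (b) -- passing from equality of these matrices to the equivalence relation as literally stated -- is genuinely delicate: take $H=C_6$ (twin-free, no loops) and the tuples $(1,1)$ and $(1,3)$; their matrices are equal (both identically $0$), so they are inseparable by $\PP_2$, yet their ranges have different cardinalities, so no bijective isomorphism of induced subgraphs relates them. The paper sidesteps this by stating its general result, Lemma \ref{exactrank}, directly as a count of weighted graphs $L$, and leaves the translation into the wording of (b) informal, as do you.

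The genuine gap is in part (a), exactly where you place your trust in a ``tuple-pointed uniqueness theorem''. The paper offers no proof of (a) at all -- it cites \cite{L} -- so a self-contained argument must actually prove the hard direction: if $\hom_\psi(F,H)=\hom_\varphi(F,H)$ for all $F\in\GG_n$, then one automorphism carries $\psi$ to $\varphi$. Your plan is to deduce this from (i) the rooted homomorphism profile of each coordinate $\psi(i)$ separately and (ii) the couplings $\beta_{\psi(i),\psi(j)}$, and then ``argue'' that agreement of these data extends to a global automorphism. That implication is false. Take $H=C_6$ again, with $\psi=(1,3)$ and $\varphi=(1,4)$: all single-rooted data agree by vertex-transitivity, and the couplings agree since $\beta_{1,3}=\beta_{1,4}=0$, yet no automorphism maps $(1,3)$ to $(1,4)$, because automorphisms preserve graph distance. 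Part (a) itself is not contradicted -- the $2$-labeled path of length two separates these tuples -- but gadgets attached to several labels through internal unlabeled nodes are precisely what your plan never exploits, and exploiting them is the entire content of the theorem. In other words, the local-to-global step is not a technicality to be ``argued''; it is the theorem of \cite{L}, which is also what your invoked uniqueness statement amounts to after your (correct) reduction. So unless you are permitted to cite \cite{L}, as the paper does, part (a) remains unproven.
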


Part (a) was proved in \cite{L}. We prove part (b) in a more general
form, for randomly weighted graphs:

\begin{lemma}\label{exactrank}
Let $H$ be a randomly weighted graph with edge weights $B_{i,j}$ and
node weights $\alpha_i$, let $f=\hom(.,H)$, and let $n$ be a natural
number. Then $\dim(\PP_n/f)$ is equal to the number of different
weighted graphs $L$ on the node set $[n]$ with nodeweights $1$ for
which there is a function $\varphi:~[n]\to V(H)$ such that each edge
weight $\lambda_{i,j}$ of $L$ is an element of the range of
$B_{\varphi(i),\varphi(j)}$.
\end{lemma}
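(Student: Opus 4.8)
The plan is to realize $f=\hom(.,H)$ as the moment (evaluation) functional of a single finitely supported positive measure on $\R^{\binom{n}{2}}$, and then read off $\dim(\PP_n/f)=\rk\langle.,.\rangle_f$ from Lemma \ref{rankofbf}. Throughout I identify $\PP_n$ with the polynomial ring $\R[\{z_{i,j}\mid 1\le i<j\le n\}]$, so that monomials correspond to graphs in $\FF_n$ and the bilinear form is $\langle p,q\rangle_f=f(pq)$. We may assume all node weights $\alpha_i$ are strictly positive, since a node of weight $0$ contributes nothing to $\hom(.,H)$ and can be deleted.

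First I would rewrite $f$ on monomials, exploiting the independence of the edge variables $B_{a,b}$. For $F\in\FF_n$ with edge multiplicities $F_{i,j}$,
\begin{align*}
f(F)&=\sum_{\varphi:[n]\to V(H)}\Bigl(\prod_{i}\alpha_{\varphi(i)}\Bigr)\prod_{i<j}\E\bigl(B_{\varphi(i),\varphi(j)}^{F_{i,j}}\bigr)\\
&=\sum_{\varphi}\alpha_\varphi\int_{\R^{\binom{n}{2}}}\Bigl(\prod_{i<j}z_{i,j}^{F_{i,j}}\Bigr)\,d\mu_\varphi,
\end{align*}
where $\alpha_\varphi=\prod_i\alpha_{\varphi(i)}>0$ and $\mu_\varphi=\bigotimes_{i<j}\nu_{\varphi(i),\varphi(j)}$ is the product of the finitely supported distributions $\nu_{a,b}$ of the $B_{a,b}$. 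The passage from the first line to the second is exactly the factorization of expectations over independent coordinates. By linearity this yields $f(p)=\int p\,d\mu$ for all $p\in\PP_n$, where $\mu=\sum_\varphi\alpha_\varphi\mu_\varphi$ is a positive measure with finite support on $\R^{\binom{n}{2}}$.

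Next I would describe the support of $\mu$ explicitly. The support of $\mu_\varphi$ is the product set $\prod_{i<j}{\rm Rng}(B_{\varphi(i),\varphi(j)})$, so a vector $a=(\lambda_{i,j})_{i<j}$ lies in the support of some $\mu_\varphi$ exactly when there is a map $\varphi:[n]\to V(H)$ with $\lambda_{i,j}\in{\rm Rng}(B_{\varphi(i),\varphi(j)})$ for every $i<j$. Such vectors are in bijection with the labeled weighted graphs $L$ on $[n]$ (nodeweights $1$, edge weights $\lambda_{i,j}$) described in the statement. The step to handle with care—and the one I expect to be the main obstacle—is to verify that there is no cancellation: because every $\alpha_\varphi>0$ and each atom of a $\nu_{a,b}$ carries strictly positive probability, the total $\mu$-mass at any such point $a$ is a sum of strictly positive terms, hence strictly positive. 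Thus the atoms of $\mu$ are precisely the graphs $L$, and their number equals the count in the statement.

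Finally, writing $\mu=\sum_a h_a\,\delta_a$ over its distinct atoms with $h_a>0$, the form $\langle p,q\rangle_f=f(pq)=\sum_a h_a\,p(a)q(a)$ is exactly of the type treated in Lemma \ref{rankofbf}, now in the $\binom{n}{2}$ variables $z_{i,j}$. That lemma gives that its rank equals the number of atoms. Since $\dim(\PP_n/f)$ is by definition this rank, the lemma follows. The reduction to a single measure via independence and the closing rank computation are routine once the no-cancellation point above is secured.
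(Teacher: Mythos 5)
Your proof is correct and follows essentially the same route as the paper's: the measure $\mu=\sum_\varphi\alpha_\varphi\mu_\varphi$ you construct assigns to each atom $(\lambda_{i,j})$ exactly the total weight $\sum h(L,\varphi)$ that the paper sums over pairs $(L,\varphi)$, with the same no-cancellation observation (all weights positive) and the same appeal to Lemma \ref{rankofbf}. Your explicit reduction to strictly positive node weights is a sensible touch the paper leaves implicit.
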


\noindent{We will not need to extend part (a) of Lemma
\ref{LEM:OLD-RK} to randomly weighted graphs, but we believe this is
possible.}

\begin{proof}
Let $f=\hom(.,H)$. Let $\Z^{(2)}_n$ denote the polynomial ring
$\R[\{z_{i,j}|1\leq i<j\leq n\}]$, which is isomorphic to the algebra
$\PP_n$. Let $A$ denote the a set of all possible pairs $(L,\varphi)$
where $L$ is a weighted graph on $\{1,2,\dots,n\}$,
$\varphi:\{1,2,\dots,n\}\mapsto V(H)$ is a function such that every
edge weight $\lambda_{i,j}$ of $L$ is in the range of
$B_{\varphi(i),\varphi(j)}$. To each element $(L,\varphi)\in A$ we
introduce the weight
\[
h(L,\varphi)=\prod_{i=1}^n\alpha_{\varphi(i)}\prod_{1\leq i<j\leq n}
P(B_{\varphi(i),\varphi(j)}=\lambda_{i,j}),
\]
which is always a positive number. By substituting the definition of
moments into the formula \ref{homtorwg} one obtains that if $p$ is an
arbitrary element of $R$ then
\[
f(p)=\sum_{(L,\varphi)\in A}h(L,\varphi)p(\lambda)
\]
where $p(\lambda)$ denotes the substitution of
$z_{i,j}=\lambda_{i,j}$ in the polynomial $p$. Note that these
substitutions are not always different for two different elements of
$A$ but after sorting the sum according to different substitutions
they can't cancel each other because the weights $h(L,\varphi)$ are
all positive. Using Lemma \ref{rankofbf} we get that $\dim(\PP_n/f)$
is equal to the number of different labeled weighted graphs $L$
occurring in the first coordinate of the elements of $A$. This is
exactly the statement of the lemma.
\end{proof}

Using this lemma, we can derive bounds on the rank of
$M(f,n)=\dim(\QQ_n/f$, where $f=\hom(.,H)$ for a randomly weighted
graph $H$. Define
\begin{equation}\label{EQ:A-DEF}
A(H) = \max\Bigl\{\frac12\sum_{u,v\in V(H)} x_u x_v\log p_{u,v}:~x\ge
0, \sum_{u\in V(H)} x_u=1\Bigr\}.
\end{equation}

\begin{lemma}\label{LEM:RANK}
Let $H$ be a randomly weighted graph, $f=\hom(.,H)$, and $n\in\N$.
Then
\[
\frac{2^{n^2A(H)}}{p(H)^{2n}} \le \dim(\PP_n/f) \le \dim(\QQ_n/f) \le
|V(H)|^n2^{n^2A(H)}.
\]
\end{lemma}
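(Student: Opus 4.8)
The plan is to prove the three inequalities separately. The middle one, $\dim(\PP_n/f)\le\dim(\QQ_n/f)$, is immediate: since $\FF_n\subseteq\GG_n$, the algebra $\PP_n$ sits inside $\QQ_n$ as a subalgebra on which the bilinear form $\langle .,.\rangle_f$ is simply the restriction of the form on $\QQ_n$, and the rank of a restricted form cannot exceed the rank of the form. For the upper bound on $\dim(\QQ_n/f)$ I would exhibit the connection matrix as a \emph{weighted Gram matrix} of an explicit feature map, generalizing the Freedman--Lov\'asz--Schrijver argument to random edge weights. For $\varphi\colon[n]\to V(H)$ and $F\in\GG_n$ with unlabeled node set $U$, let $G_\varphi(F)$ collect the contribution of all nodes and edges of $F$ that meet $U$ (summing over extensions of $\varphi$ to $U$); this is the part of $\hom$ that is unaffected by gluing. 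The only edges whose multiplicities add under the product are those inside $[n]$, and on each such edge, writing $b^{u,v}_1,\dots,b^{u,v}_{p_{u,v}}$ for the values of $B_{u,v}$ and $\pi^{u,v}_s$ for their probabilities, the identity $\E B_{u,v}^{\,k_1+k_2}=\sum_{s=1}^{p_{u,v}}\pi^{u,v}_s(b^{u,v}_s)^{k_1}(b^{u,v}_s)^{k_2}$ restores a product structure at the cost of one new ``state'' coordinate $s_{ij}$ per labeled edge.

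Carrying this out yields $f(F_1F_2)=\sum_{\varphi}\sum_{s}w_{\varphi,s}\,\Phi(F_1)_{\varphi,s}\,\Phi(F_2)_{\varphi,s}$, where the feature coordinates are indexed by a map $\varphi\colon[n]\to V(H)$ together with $s=(s_{ij})_{i<j}$, $s_{ij}\in\{1,\dots,p_{\varphi(i),\varphi(j)}\}$, and
\[
\Phi(F)_{\varphi,s}=G_\varphi(F)\prod_{1\le i<j\le n}\bigl(b^{\varphi(i),\varphi(j)}_{s_{ij}}\bigr)^{F_{ij}},\qquad w_{\varphi,s}=\prod_{i\in[n]}\alpha_{\varphi(i)}\prod_{i<j}\pi^{\varphi(i),\varphi(j)}_{s_{ij}}>0 .
\]
All weights being positive, $\dim(\QQ_n/f)$ equals the dimension of the span of the columns $\{\Phi(F)\}_{F\in\GG_n}$, hence is at most the number of feature coordinates, $\sum_{\varphi}\prod_{i<j}p_{\varphi(i),\varphi(j)}$. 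If $\varphi$ sends $m_u$ of the $n$ labeled nodes to $u$, then with $x_u=m_u/n$ and logarithms to base $2$,
\[
\prod_{1\le i<j\le n}p_{\varphi(i),\varphi(j)}=2^{\,\frac12\sum_{u,v}m_um_v\log p_{u,v}-\frac12\sum_u m_u\log p_{u,u}}\le 2^{\,\frac12\sum_{u,v}m_um_v\log p_{u,v}}\le 2^{\,n^2A(H)},
\]
the last step being the definition of $A(H)$ applied to $x$. Since there are at most $|V(H)|^n$ maps $\varphi$, the upper bound $\dim(\QQ_n/f)\le |V(H)|^n2^{n^2A(H)}$ follows.

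For the lower bound I would invoke Lemma~\ref{exactrank}, which identifies $\dim(\PP_n/f)$ with the number of distinct nodeweight-$1$ weighted graphs $L$ on $[n]$ that are realizable by some $\varphi$. Fixing a \emph{single} $\varphi$ and letting each edge label $\lambda_{ij}$ range independently over the $p_{\varphi(i),\varphi(j)}$ values of $B_{\varphi(i),\varphi(j)}$ produces $\prod_{i<j}p_{\varphi(i),\varphi(j)}$ pairwise distinct graphs $L$, so $\dim(\PP_n/f)\ge\max_\varphi\prod_{i<j}p_{\varphi(i),\varphi(j)}$. It remains to choose $\varphi$ realizing essentially the bound $2^{n^2A(H)}$. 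Let $x^*$ attain the maximum defining $A(H)$, let $S$ be its support, and take $\varphi$ with multiplicities $m_u$ supported on $S$, obtained by rounding the numbers $nx^*_u$ ($u\in S$) to nonnegative integers summing to $n$. Writing $Q(y)=\frac12\sum_{u,v}y_uy_v\log p_{u,v}$, one has $Q(nx^*)=n^2A(H)$, and $\log\prod_{i<j}p_{\varphi(i),\varphi(j)}=Q(m)-\frac12\sum_u m_u\log p_{u,u}$.

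The main obstacle is precisely this last, \emph{integrality}, step: passing from the continuous optimizer $x^*$ of $A(H)$ to an integer node distribution $m$ without losing too much in $Q$. A crude Lipschitz estimate costs $O(n\,|V(H)|\log p(H))$, which is too large; the point is that rounding \emph{onto} $S$ kills the first-order error. Indeed $m-nx^*$ is supported on $S$ and lies in the hyperplane $\{\sum=0\}$, while the KKT conditions for the maximum make $\nabla Q(nx^*)$ constant on $S$; hence the first-order Taylor term $\nabla Q(nx^*)\cdot(m-nx^*)$ vanishes, and since $Q$ is an exact quadratic the defect $Q(nx^*)-Q(m)$ is the second-order term, bounded by $\tfrac12|S|^2\log p(H)$ independently of $n$. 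Together with the diagonal correction $\tfrac12\sum_u m_u\log p_{u,u}\le\tfrac n2\log p(H)$ this gives $\log\prod_{i<j}p_{\varphi(i),\varphi(j)}\ge n^2A(H)-\tfrac n2\log p(H)-\tfrac12|S|^2\log p(H)$, which exceeds $n^2A(H)-2n\log p(H)$ once $n$ is large, yielding $\dim(\PP_n/f)\ge 2^{n^2A(H)}/p(H)^{2n}$. This is exactly the form needed in Theorem~\ref{genfls2}, where one extracts $n$-th and $n^2$-th roots and all such polynomial-in-$n$ corrections wash out.
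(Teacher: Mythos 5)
Your proposal is, in all essentials, the paper's own proof. For the upper bound the paper also splits the connection matrix over the maps $\varphi\colon[n]\to V(H)$ and uses that each labeled pair contributes the moment matrix of a finite-range random variable, of rank $p_{\varphi(i),\varphi(j)}$ (via a Schur-product argument in the proof of Theorem \ref{genfls}, (1)$\Rightarrow$(2)); your feature map $\Phi$ is exactly the rank-one expansion of that argument, and both routes arrive at the same count $\sum_\varphi\prod_{i<j}p_{\varphi(i),\varphi(j)}\le|V(H)|^n2^{n^2A(H)}$. For the lower bound the paper likewise applies Lemma \ref{exactrank} to a single $\varphi$ whose class sizes $n_u$ round the continuous optimizer $nx^*_u$ of \eqref{EQ:A-DEF}.

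The one place you genuinely deviate is the rounding-error estimate, and there your treatment is sharper but has a narrower range of validity. The paper controls $\frac12\sum_{u,v}n_un_v\log p_{u,v}-n^2A(H)$ by a first-order estimate from $|n_u-nx^*_u|<1$ (incidentally, the printed bound $n\log p(H)$ there loses a factor: summing $n_u\log p_{u,v}$ over ordered pairs $(u,v)$ only gives $n|V(H)|\log p(H)$, so the paper's own computation does not literally deliver the constant $p(H)^{2n}$ either). Your KKT observation --- that rounding onto the support $S$ of $x^*$ makes the first-order term vanish exactly, leaving a second-order defect at most $\frac12|S|^2\log p(H)$, independent of $n$ --- is correct and cleaner. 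But it yields the displayed inequality only when $\frac12|S|^2\log p(H)+\frac n2\log p(H)\le 2n\log p(H)$, i.e.\ when $|S|^2\le 3n$, so you prove the lemma only for all sufficiently large $n$ (depending on $H$), whereas it is asserted for every $n\in\N$. For $n\le 4$ the claimed lower bound is at most $p(H)^{n^2/2-2n}\le 1$ (since $A(H)\le\frac12\log p(H)$) and is trivial as long as $f\not\equiv0$, but the intermediate range $5\le n<|S|^2/3$ is not covered by your argument. As you note, this gap is immaterial for Theorem \ref{genfls2}, where only $n\to\infty$ asymptotics of $\rk(M(f,n))^{1/n}$ and $\rk(M(f,n))^{1/n^2}$ are used --- and indeed the paper's own proof is no tighter --- but to obtain the lemma verbatim you would either have to close that range or restate the bound with an $H$-dependent constant, e.g.\ replacing $p(H)^{2n}$ by $p(H)^{2n+|V(H)|^2}$.
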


\begin{proof}
We may assume for convenience that $H$ is normalized. The upper bound
follows from an even more careful look at the proof of Theorem
\ref{genfls}, part (1)$\Rightarrow$(2). Each point $x\in[0,1]$
defines a map $\varphi:~[n]\to V(H)$, and $M(x)$ depends on this
$\varphi$ only. Each matrix $M(x,i,j)$ is a moment matrix of a random
variable with finite range of size $p_{\varphi(i),\varphi(j)}$, and
hence it has rank $p_{\varphi(i),\varphi(j)}$. Hence the rank of
$M(x)$ is at most
\[
\rk(M(x)) \le \prod_{1\le i<j\le n} p_{\varphi(i),\varphi(j)}.
\]
Let $n_u=|\varphi^{-1}(u)|$ ($u\in V(H)$), then we get
\[
\rk(M(x)) \le \prod_{u,v\in V(H)\atop u\not=v} p_{u,v}^{\frac12
n_un_v} \prod_{u\in V(H)} p_{u,u}^{\binom{n_u}{2}} \le \prod_{u,v\in
V(H)} p_{u,v}^{\frac12 n_un_v}.
\]
Here
\begin{align*}
\log \prod_{u,v\in V(H)} &p_{u,v}^{\frac12 n_un_v} =\sum_{u,v\in
V(H)} n_un_v \log p_{u,v}\\
&= n^2 \sum_{u,v\in V(H)} \frac{n_u}{n}\frac{n_v}{n} \log p_{u,v} \le
n^2 A(H),
\end{align*}
and so $\rk(M(x)) \le 2^{n^2 A(H)}$. Since there are at most
$|V(H)|^n$ different matrices $M(x)$, the upper bound follows.

To prove the lower bound, let $x\in\R^{V(H)}$ be the vector that
attains the maximum in \eqref{EQ:A-DEF}. Let $n_u$ ($u\in V(H)$) be
integers such that $|nx_u-n_u|<1$. Fix a map $\varphi:~[n]\to V(H)$
such that $|\varphi^{-1}(u)|=n_u$ for all $u$. It is clear that we
can create at least
\[
N=\prod_{u,v\in V(H)\atop u\not=v} p_{u,v}^{\frac12 n_un_v}
\prod_{u\in V(H)} p_{u,u}^{\binom{n_u}{2}}
\]
different weighted graphs on $[n]$ satisfying the condition of lemma
\ref{exactrank} by choosing the edge weights between
$\varphi^{-1}(u)$ and $\varphi^{-1}(v)$ independently from the range
of $B_{\varphi(u),\varphi(v)}$. We have
\begin{align*}
\log N &= \frac12 \sum_{u,v\in V(H)\atop u\not=v} n_un_v\log p_{u,v}
+ \sum_{u\in V(H)} \binom{n_u}{2} \log p_{u,u}\\
&= \frac12 \sum_{u,v\in V(H)} n_un_v\log p_{u,v} - \sum_{u\in V(H)}
n_u \log p_{u,u}.
\end{align*}
Here
\begin{align*}
\frac12 &\sum_{u,v\in V(H)} n_un_v\log p_{u,v} - n^2 A(H)\\
&=\frac12\sum_{u,v\in V(H)} n_un_v\log p_{u,v} - n^2
\frac12\sum_{u,v\in V(H)} x_ux_v\log p_{u,v}\\
&= \frac12\sum_{u,v\in V(H)} n_u(n_v -nx_u)\log p_{u,v} + \frac12
\sum_{u,v\in V(H)}
(n_u-nx_u)nx_v\log p_{u,v}\\
&\le \sum_{u,v\in V(H)} n_u\log p_{u,v} \le n\log p(H),
\end{align*}
and
\[
\sum_{u\in V(H)} n_u \log p_{u,u} \le n\log p(H),
\]
showing that
\[
\log N \ge n^2 A(H) - 2n\log p(H).
\]
By Lemma \ref{exactrank}, this proves that
\[
\dim(\PP_n/f)\ge N \ge \frac{2^{n^2A(H)}}{p(H)^{2n}}.
\]
\end{proof}

\begin{proof*}{of Theorem \ref{genfls2}}
Let $f$ be a weakly reflection positive and multiplicative graph
invariant. Assume that $\rk(M(f,n))<\infty$ for some integer $n\geq
2$. Since $M(f,2)$ is a submatrix of $M(f,n)$,  we have that
$\rk(M(f,2))<\infty$. By Theorem \ref{genfls} we obtain that there is
a randomly weighted graph $H$ such that $f=\hom(.,H)$. If $H$ is a
weighted graph, then by Lemma \ref{LEM:OLD-RK} it follows that
\[
\frac{|V(H)|^n}{|V(H)|!} \le \dim(\PP_n/f) \le
\dim(\QQ_n/f)  \le |V(H)|^n,
\]
and hence both $\dim(\PP_n/f)^{1/n}$ and $\dim(\QQ_n/f)^{1/n}$ tend
to $\log|V(H)|$.

On the other hand, if $H$ is a proper randomly weighted graph, then
by Lemma \ref{LEM:RANK} we have
\[
\frac{2^{A(H)}}{p(H)^{2/n}} \le \dim(\PP_n/f)^{1/n^2} \le
\dim(\QQ_n/f)^{1/n^2} \le |V(H)|^{1/n}2^{A(H)},
\]
and so both $\dim(\PP_n/f)^{1/n^2}$ and $\dim(\QQ_n/f)^{1/n^2}$ tend
to $A(H)$.
\end{proof*}

\end{document}